\numberwithin{equation}{section}
\newtheorem{theorem}{Theorem}[section]
\newtheorem{lemma}[theorem]{Lemma}
\newtheorem{proposition}[theorem]{Proposition}
\newtheorem{problem}[theorem]{Problem}
\theoremstyle{definition}
\newtheorem{definition}[theorem]{Definition}
\newtheorem{remark}[theorem]{Remark}
\theoremstyle{remark}
\begin{document}
\title{Subsonic Euler-Poisson flows with nonzero vorticity in convergent  nozzles }
\author{Yuanyuan Xing\thanks{School of Mathematics and Statistics,
                Northeastern University at Qinhuangdao, Qinhuangdao,  Hebei Province, 066004, China. Email: xingyuanyuan@neuq.edu.cn }\and
Zihao Zhang\thanks{School of Mathematics, Jilin University, Changchun, Jilin Province,  130012, China. Email: zhangzihao@jlu.edu.cn}}
\date{}
\newtheorem{pro}{Problem}[section]
 \newtheorem{thm}{Theorem}[section]
 \newtheorem{cor}[thm]{Corollary}
 \newtheorem{lem}[theorem]{Lemma}
 \newtheorem{prop}[thm]{Proposition}
 \newtheorem{defn}{Definition}[section]
  \theoremstyle{remark}
 \newtheorem{rem}{\bf{Remark}}[section]
 \numberwithin{equation}{section}

\def\be{\begin{equation}}
\def\ee{\end{equation}}
\def\beq{\begin{equation*}}
\def\eeq{\end{equation*}}
\def\bc{\begin{cases}}
\def\ec{\end{cases}}
\def\bs{\begin{split}}
\def\es{\end{split}}
\def\oT{\overline{T}}
\def\hT{\hat{T}}
\def\d{{\rm div}}
\def\R{\mathbb{R}^2}
\def\c{\cdot}
\def\n{\nabla}
\def\i{\infty}
\def\D{\mathcal{D}(u)}
\def\p{\partial}

\renewcommand\figurename{\scriptsize Fig}
\pagestyle{myheadings} \markboth{ Subsonic Euler-Poisson flows  with
nonzero  vorticity }{ Subsonic Euler-Poisson flows with
nonzero  vorticity }\maketitle
\begin{abstract}
  This paper concerns the well-posedness of subsonic Euler-Poisson flows in a convergent nozzle. Due to the geometry of the nozzle, we first introduce a coordinate  transformation to prove
 the existence of radially symmetric subsonic solutions to the steady Euler-Poisson system. We then
 investigate the structural stability of these background subsonic flows under perturbations of suitable boundary conditions, and establish the existence and uniqueness of smooth subsonic Euler-Poisson flows with nonzero vorticity.   The solution shares the same regularity for the velocity, the pressure, the entropy and the electric potential.  The deformation-curl-Poisson decomposition is utilized to
reformulate the steady Euler-Poisson system as a deformation-curl-Poisson system
together with several transport equations. The key point lies on the analysis of the well-posedness of the boundary value problem for the associated linearized elliptic system, which is established by using a special structure of the system to derive a priori estimates.
\end{abstract}
\begin{center}
\begin{minipage}{5.5in}
Mathematics Subject Classifications 2020: 35G60, 35J66, 35Q35, 76N10.\\
Key words:   subsonic Euler-Poisson flows,  vorticity, convergent nozzle, stability.
\end{minipage}
\end{center}
\section{Introduction and main results}\noindent
\par In this paper, we investigate  smooth subsonic Euler-Poisson   flows with nonzero vorticity in  a convergent nozzle, which is governed by the two-dimensional steady compressible Euler-Poisson system:
\begin{equation}\label{ep1}
\begin{cases}
\begin{aligned}
&(\rho u)_x+(\rho v)_y=0,\\
&(\rho u^2)_x+(\rho uv)_y+P_x=\rho \Phi_x,\\
&(\rho uv)_x+(\rho v^2)_y+P_y=\rho \Phi_y,\\
&(\rho u\mathcal{E}+Pu)_x+(\rho v\mathcal{E}+Pv)_y=\rho \mathbf{u}\cdot\nabla\Phi,\\
&\Delta\Phi=\rho-\tilde b,
\end{aligned}
\end{cases}
\end{equation}
where  $\rho$, $\mathbf{u}=(u,v)$, $P$, $\mathcal{E}$ and $\Phi$ represent the density, velocity field, pressure, total energy and electric potential, respectively. $\tilde b>0$ denotes the density of fixed, positively charged background ions.
 For the ideal polytropic gas, the equation of state and
 the total energy  are
of the form
\begin{equation*}
  P= e^{S}\rho^{\gamma}
\quad{\rm {and}}\quad \mathcal{E}=\frac1 2|{\bf u}|^2+\frac{ P}{(\gamma-1)\rho},
\end{equation*}
respectively, where $S$ is  the entropy and $ \gamma> 1 $ is the adiabatic exponent. Denote the Mach number by
$$M=\frac{\vert \mathbf{u}\vert}{c},$$ where $c=\sqrt{\frac{\gamma P}{\rho} }$ is the local sound speed. The solution to \eqref{ep1} is called supersonic if $M>1$, subsonic if $M<1$, and sonic if $M=1$.
\par The Euler-Poisson system stands as one of the fundamental hydrodynamical models describing the dynamic behaviour of many important physical flows, including the propagation of electrons in submicron semiconductor devices \cite{Markowich}, the biological transport of ions for channel proteins \cite{Chen}, the motion of self-gravitating gaseous stars \cite{Chandrasekhar} and so on. In the study of time-independent solutions to the Euler-Poisson system, the well-posedness theory of nozzle flows has always been one of the key issues in the field of partial differential equations.

The main difficulty in the mathematical research of multi-dimensional Euler-Poisson system lies in the fact that the system changes type as the flow speed varies from subsonic to supersonic. For a subsonic state, the system can be decomposed into a nonlinear elliptic system with homogeneous transport equations. The first result concerning subsonic solutions with large variations for the Euler-Poisson system was given in \cite{Bae3}, where the unique existence and stability of isentropic potential flows in a multidimensional flat nozzle were established by exploiting a key structural property of the elliptic system to derive an energy estimate. Subsonic flows with nonzero vorticity in a two-dimensional flat nozzle were investigated in \cite{Bae2} by combining the Helmholtz decomposition with the stream function approach. A new Helmholtz decomposition for the velocity field was introduced in \cite{Bae5} to establish the structural stability of three-dimensional axially symmetric subsonic flows with nonzero swirl in a circular cylinder. Similar results for self-gravitating isentropic and non-isentropic subsonic flows in two-dimensional flat nozzles and annuluses were obtained in \cite{Bae4,Cao,Duan,WZ25} by using the stream function formulation. For a supersonic state, the Euler-Poisson system can be decomposed into a nonlinear hyperbolic-elliptic coupled system with homogeneous transport equations. The structural stability of supersonic irrotational flows and flows with nonzero vorticity in a two-dimensional flat nozzle was proved in \cite{Bae1}, where a priori estimate for the associated linearized hyperbolic-elliptic system was obtained through a delicate choice of multipliers. Supersonic potential flows in three-dimensional  cylindrical nozzles and two-dimensional divergent nozzles were investigated in \cite{Bae6,Duan0,WZ25-1}. For other relevant works, one may refer to \cite{Degond1,Degond2,Guo,Markowich0}.
\par  It is shown in \cite{Liu} that transonic shock flows governed by Euler system are dynamically stable in a divergent nozzle but unstable in a convergent nozzle. Furthermore, \cite{Luo1,Luo2} reveals that if the background charge density is less than the sonic density, the flows in one-dimensional flat nozzles governed by the Euler-Poisson system exhibit behavior similar to flows in divergent nozzles for the Euler system. These results indicate that the geometry of the domain and the electric potential play a crucial role in determining the analytic behavior of nozzle flows. This naturally raises a question of the well-posedness of multi-dimensional flows under the coupled effects of the electric potential and the geometry of a convergent domain.
The main goal of this paper is to establish the unique existence and structural stability of subsonic flows with nonzero vorticity to the  steady Euler-Poisson system through a two-dimensional convergent nozzle  by characterizing a class of physical acceptable boundary conditions. Our results will demonstrate that the electric field force in compressible subsonic flows can counteract the geometric effects of the convergent nozzle, thereby stabilizing certain physical features of the flow. On the other hand,
the study
  for subsonic flows to Euler-Poisson system is a crucial step to analyze supersonic flows with nonzero vorticity under the coupled effects of the electric potential and the geometry of a convergent domain.
  \subsection{Radially symmetric subsonic  Euler-Poisson flows}\noindent
\par Let $\theta_0\in (0, \frac{\pi}{2})$, $r_1$, $r_2(>r_1)$ be  fixed positive constants. Define   a two-dimensional convergent  nozzle by
\begin{equation*}
\tilde\Omega=\Big\{(x,y):0<r_1<\sqrt{x^2+y^2}<r_2<+\infty, -\theta_0<\arctan\frac{y}{x}<\theta_0\Big\}.
\end{equation*}
 In the polar coordinates $(\tilde{r},\tilde{\theta})$ satisfying $x=\tilde{r}\cos\theta$ and $y=\tilde{r}\sin\theta$, the velocity is decomposed as ${\mathbf{u}}(x,y)=\tilde{U}\mathbf{e}_{\tilde{r}}+\tilde{V}\mathbf{e}_{\tilde{\theta}}$ with
\begin{equation}\label{coef2}
\begin{aligned}
\mathbf{e}_{\tilde{r}}=
\left(
  \begin{array}{c}
    \cos\tilde{\theta} \\
    \sin\tilde{\theta} \\
  \end{array}
\right)
,\quad \mathbf{e}_{\tilde{\theta}}=
\left(
  \begin{array}{c}
    -\sin\tilde{\theta} \\
    \cos\tilde{\theta} \\
  \end{array}
\right).
\end{aligned}
\end{equation}
Then \eqref{ep1} is transformed into
\begin{equation}\label{EP}
\begin{cases}
\begin{aligned}
&(\tilde{\rho} \tilde{U})_{\tilde{r}}+\frac{1}{\tilde{r}}(\tilde{\rho} \tilde{V})_{\tilde{\theta}}+\frac{1}{\tilde{r}}\tilde{\rho} \tilde{U}=0,\\
&\tilde{\rho} \tilde{U}\tilde{U}_{\tilde{r}}+\frac{1}{\tilde{r}}\tilde{\rho} \tilde{V}\tilde{U}_{\tilde{\theta}}-\frac{1}{\tilde{r}}\tilde{\rho} \tilde{V}^2+\tilde{P}_{\tilde{r}}=\tilde{\rho}\tilde{\Phi}_{\tilde{r}},\\
&\tilde{\rho} \tilde{U}\tilde{V}_{\tilde{r}}+\frac{1}{\tilde{r}}\tilde{\rho} \tilde{U}\tilde{V}+\frac{1}{\tilde{r}}\tilde{\rho} \tilde{V}\tilde{V}_{\tilde{\theta}}+\frac{1}{\tilde{r}}\tilde{P}_{\tilde{\theta}}
=\frac{1}{\tilde{r}}\tilde{\rho}\tilde{\Phi}_{\tilde{\theta}},\\
&(\tilde{\rho} \tilde{U}\tilde{\mathscr{B}})_{\tilde{r}}+\frac{1}{\tilde{r}}(\tilde{\rho} \tilde{V}\tilde{\mathscr{B}})_{\tilde{\theta}}+\frac{1}{\tilde{r}}\tilde{\rho} \tilde{U}\tilde{\mathscr{B}}
=\tilde{\rho}(\tilde{U}\tilde{\Phi}_{\tilde{r}}+\frac{1}{\tilde{r}}\tilde{V}\tilde{\Phi}_{\tilde{\theta}}),\\
&\tilde{\Phi}_{\tilde{r}\tilde{r}}+\frac{1}{\tilde{r}^2}\tilde{\Phi}_{\tilde{\theta}\tilde{\theta}}
+\frac{1}{\tilde{r}}\tilde{\Phi}_{\tilde{r}}=\tilde{\rho}-\tilde b,
\end{aligned}
\end{cases}
\end{equation}
where
\begin{equation}
\tilde{\mathscr{B}}=\frac{\tilde{U}^2+\tilde{V}^2}{2}+\frac{\gamma \tilde{P}}{(\gamma-1)\tilde{\rho}}.
\end{equation}
 We define the pseudo-Bernoulli function $\tilde{\mathscr{K}}$ by
\begin{equation*}
\tilde{\mathscr{K}}=\tilde{\mathscr{B}}-\tilde{\Phi}.
\end{equation*}
Then it is derived from \eqref{EP} that
\begin{equation}\label{EP-k}
(\tilde{\rho} \tilde{U}\tilde{\mathscr{K}})_{\tilde{r}}+\frac{1}{\tilde{r}}(\tilde{\rho} \tilde{V}\tilde{\mathscr{K}})_{\tilde{\theta}}+\frac{1}{\tilde{r}}\tilde{\rho} \tilde{U}\tilde{\mathscr{K}}
=0.
\end{equation}

 \par We first construct a class of radially symmetric subsonic flows to the steady Euler-Poisson system in a convergent nozzle. Fix $\tilde b$ to be a constant $b_0>0$,  the background flow is described by smooth functions of the form
\begin{equation*}
{\bf u}(x,y)=\tilde U(\tilde{r})\mathbf e_r,
 \ \ \tilde\rho(x,y)=\tilde \rho(\tilde{r}),\ \  \tilde P(x,y)=\tilde P(\tilde{r}),\ \  \tilde\Phi(x,y)=\tilde \Phi(\tilde{r}), \ \ \tilde r\in[r_1, r_2],\\
\end{equation*}
which solves
\begin{equation}\label{sreq}
\begin{cases}
\frac{\mathrm{d}}{\mathrm{d}\tilde r}(\tilde{r}\tilde{\rho} \tilde{U})=0, &  \tilde r\in[r_1, r_2],\\
\frac{\mathrm{d}}{\mathrm{d}\tilde r}(\tilde{r}\tilde{\rho} \tilde{U}^2)+\tilde{r}\frac{\mathrm{d}\tilde{P}}{\mathrm{d}\tilde r}=\tilde{r}\tilde{\rho} \tilde{E},&  \tilde r\in[r_1, r_2],\\
\frac{\mathrm{d}}{\mathrm{d}\tilde r}(\tilde{r}\tilde{\rho} \tilde{U}\tilde{\mathscr{B}})=\tilde{r}\tilde{\rho} \tilde{U}\tilde{E},&  \tilde r\in[r_1, r_2],\\
\frac{\mathrm{d}}{\mathrm{d}\tilde r}(\tilde{r}\tilde{E})=\tilde{r}(\tilde{\rho}-b_0), &  \tilde r\in[r_1, r_2],\\
\end{cases}
\end{equation}
with the boundary conditions
\begin{equation}\label{srco}
\tilde{\rho}(r_2)=\rho_0,\quad \tilde{U}(r_2)=-U_0, \quad \tilde{P}(r_2)=P_0,\quad \tilde{E}(r_2)=E_0.
\end{equation}
 Here
$$  \tilde E( \tilde r)=\frac{\mathrm{d}\tilde\Phi}{\mathrm{d}\tilde r} \ \ {\rm{and}} \ \ \tilde{\mathscr{B}}( \tilde r)=\frac{\tilde U^2}{2}+\frac{\gamma \tilde P}{(\gamma-1)\tilde \rho}, \ \  \tilde r\in[r_1, r_2].$$
 As we seek solutions flowing in the direction of $-\mathrm{e}_{\tilde{r}}$, introduce new variable $(r,\hat{r})=(r_2-\tilde{r},\tilde{r})$ and set
\begin{equation}
(\bar\rho,\bar U, \bar P, \bar E)(r):=(\tilde{\rho}, -\tilde{U}, \tilde{P}, \tilde{E})(\tilde{r}),\ \ r\in[0,r_2-r_1].
\end{equation}
Denote $\frac{\mathrm{d}}{\mathrm{d}r}$ by $'$ and $ R=r_2-r_1 $. \eqref{sreq} and \eqref{srco} are equivalent to the following equations for $(\rho, U, P, E)$:
\begin{equation}\label{-sreq}
\begin{cases}
\begin{aligned}
&(\hat{r}\bar\rho \bar U)'=0,\ \ &r\in[0,R],\\
&(\hat{r}\bar\rho  \bar U^2)'+\hat{r}\bar P'=-\hat{r}\bar\rho\bar E,\ \ &r\in[0,R],\\
&(\hat{r}\bar\rho\bar U\bar{\mathscr{B}})'=-\hat{r}\bar\rho U\bar E,\ \ &r\in[0,R],\\
&(\hat{r}\bar E)'=-\hat{r}(\bar\rho-b_0),\ \ &r\in[0,R],
\end{aligned}
\end{cases}
\end{equation}
with the initial value
\begin{equation}\label{-srco}
(\bar \rho,\bar  U,\bar  P,\bar  E)(0)=(\rho_0,U_0,P_0,E_0),
\end{equation}
where
\begin{equation}
\bar{\mathscr{B}}(r)=\frac{\bar U^2}{2}+\frac{\gamma e^{\bar S} \bar\rho^{\gamma-1}}{\gamma-1},\ \ r\in[0,R].
\end{equation}
It follows from \eqref{-sreq} that
\begin{equation}\label{J-0}
\begin{cases}
\begin{aligned}
&\hat{r}(\bar\rho\bar U)(r)=J_0,\quad J_0=r_2\rho_0U_0,\ \ &r\in[0,R],\\
&\bar S(r)=S_0,\quad S_0=\ln\frac{P_0}{\rho_0^{\gamma}},\ \ &r\in[0,R],\\
&\bar{\mathscr{B}}'=-\bar E,\ \ &r\in[0,R].
\end{aligned}
\end{cases}
\end{equation}
Furthermore, it holds that
\begin{equation}\label{J-00}
\begin{split}
\bar\rho'=\frac{\bar\rho \bar E}{\bar c^2(\bar M^2-1)}+\frac{\bar\rho \bar M^2}{\hat{r}(\bar M^2-1)},\quad
\bar U'=\frac{\bar U\bar E}{\bar c^2(1-\bar M^2)}+\frac{\bar U}{\hat{r}(1-\bar M^2)}, \ \ \bar M^2=\frac{\bar U^2}{\bar c^2}.
\end{split}
\end{equation}
 The definition of $\bar M^2$ gives that
\begin{equation}
\bar\rho=\left(\frac{J_0^2}{\gamma\mathrm{e}^{S_0}\hat{r}^2\bar M^2}\right)^{\frac{1}{\gamma+1}}
:=\mu_0\left(\frac{1}{\hat{r}^2\bar M^2}\right)^{\frac{1}{\gamma+1}} \ \ {\rm{and}} \ \
\bar c^2=\gamma\mathrm{e}^{S_0}\bar \rho^{\gamma-1}
=\gamma\mathrm{e}^{S_0}\mu_0^{\gamma-1}
\left(\frac{1}{\hat{r}^2\bar M^2}\right)^{\frac{\gamma-1}{\gamma+1}}.
\end{equation}
Then direct calculations deduce that
\begin{equation}\label{M2}
\begin{split}
(\bar M^2)'&=\frac{\bar M^2}{1-\bar M^2}\left(\frac{(\gamma+1)\bar E}{\bar c^2}+\frac{2+(\gamma-1)\bar M^2}{\hat{r}}\right)\\
&=\frac{\bar M^2}{1-\bar M^2}
\left(\frac{(\gamma+1)\bar E}{\gamma\mathrm{e}^{S_0}\mu_0^{\gamma-1}}(\hat{r}^2\bar M^2)
^{\frac{\gamma-1}{\gamma+1}}+\frac{2+(\gamma-1)\bar M^2}{\hat{r}}\right)\\
&:=h_1(r,\bar M^2,\bar E),
\end{split}
\end{equation}
and
\begin{equation}\label{rE}
\begin{split}
(\hat{r}\bar E)'=-\hat{r}\left(\mu_0\left(\frac{1}{\hat{r}^2\bar M^2}\right)^{\frac{1}{\gamma+1}}-b_0\right):=h_2(r,\bar M^2,\bar E).
\end{split}
\end{equation}
This, together with \eqref{J-0} and \eqref{J-00}, shows that the initial value problem \eqref{-sreq}-\eqref{-srco} is equivalent to
\begin{equation}\label{sreq1}
\begin{cases}
\begin{aligned}
&\hat{r}(\bar\rho \bar U)(r)=J_0, \quad \bar S(r)=S_0,\ \ &r\in[0,R],\\
&(\bar M^2)'=h_1(r,\bar M^2,\bar E),\ \ &r\in[0,R],\\
&(\hat{r}\bar E)'=h_2(r,\bar M^2,\bar E),\ \ &r\in[0,R],
\end{aligned}
\end{cases}
\end{equation}
with
the boundary conditions:
\begin{equation}\label{-srco-c}
(\bar M^2,\bar E)(0)=(M_0^2,E_0)=\bigg(\frac{\rho_0U_0^2}{\gamma P_0},E_0\bigg).
\end{equation}
  Inspired by \cite{Bae7},
we establish the existence and uniqueness of smooth subsonic solution to \eqref{-sreq} and \eqref{-srco} in the following lemma.
\begin{lemma}\label{Tm}
Fix $\gamma>1 $ and $ b_0>0$. For given positive constants $ r_2 $, $\rho_0$, $U_0$ and $P_0$ satisfying
$$\frac{\rho_0U_0^2}{\gamma P_0}<1, $$ if $r_1\in(0,r_2)$ satisfies
\begin{equation}\label{LE}
\ln\frac{r_2}{r_1}<\frac{\gamma+1}{2(\gamma-1)},
\end{equation}
then there exists a constant $\check{E}<0$ depending only on the data $(r_1, r_2, \gamma,b_0, \rho_0,U_0,P_0 )$ such that, for any $E_0<\check{E}$, the initial value problem \eqref{sreq1} and \eqref{-srco-c} has a unique smooth subsonic solution $(\bar M^2,\bar E)(r)$ for $r\in[0,R]$, provided $R=r_2-r_1$.
\end{lemma}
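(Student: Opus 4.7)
The plan is to view \eqref{sreq1} as a first-order initial value problem for the reduced pair $(M^2,E)$, with $\rho$, $U$, $P$ recovered algebraically from \eqref{J-0} and the definition of $M$. On the open subsonic set $\{M^2<1\}$ with $\hat r\in[r_1,r_2]$, the right-hand sides $h_1$ and $h_2$ defined by \eqref{M2}--\eqref{rE} are smooth, so Picard--Lindelöf produces a unique smooth local solution at $r=0$. The whole content of the lemma is therefore to propagate this local solution to the full interval $[0,R]$ while keeping it inside a compact subset of the subsonic regime.

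I would set this up as a bootstrap/continuity argument. Fix constants $0<\underline M^2<M_0^2$ and $K>0$ to be chosen in terms of the data, and let $[0,R_*]\subset[0,R]$ be a maximal sub-interval on which the a priori hypotheses
\[
M^2(r)\in[\underline M^2,M_0^2],\qquad E(r)\le -K
\]
both hold. Under these hypotheses the closed-form expressions give two-sided bounds on $\rho=\mu_0(\hat r^2M^2)^{-1/(\gamma+1)}$ and on $c^2$, so $|h_2|$ is uniformly bounded. Integrating $(\hat rE)'=h_2$ and using $\hat r\ge r_1$ then yields
\[
E(r)\;\le\;\frac{r_2E_0+C_1R}{r_1},
\]
with $C_1$ depending only on the data. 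Choosing $\check E$ so negative that $r_2\check E+C_1R\le -2Kr_1$ strictly improves the hypothesis $E\le -K$. Simultaneously, for $K$ sufficiently large the electric term $(\gamma+1)E/c^2$ in \eqref{M2} dominates the geometric term $(2+(\gamma-1)M^2)/\hat r$, so $h_1\le 0$ on $[0,R_*]$, hence $M^2$ is non-increasing and the upper bound $M^2\le M_0^2$ is preserved.

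The hard part will be producing the strict lower bound $M^2\ge\underline M^2$, and this is exactly where the length condition \eqref{LE} comes in. Observe the built-in self-regulation: as $M^2$ decreases, the sound speed $c^2=\gamma e^{S_0}\mu_0^{\gamma-1}(\hat r^2M^2)^{-(\gamma-1)/(\gamma+1)}$ grows, so the destabilizing ratio $|E|/c^2$ stays uniformly bounded even when $|E|$ is large. I would quantify this by separating variables in \eqref{M2} to derive an inequality of the schematic form
\[
\int_{M^2(r)}^{M_0^2}\frac{1-t}{t\bigl(2+(\gamma-1)t\bigr)}\,dt\;\le\;\ln\frac{r_2}{\hat r(r)}+\varepsilon(\check E,K),
\]
where $\varepsilon$ is a controllable correction produced by the uniformly bounded electric term. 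Partial fractions give the explicit antiderivative
\[
\frac12\ln t-\frac{\gamma+1}{2(\gamma-1)}\ln\!\bigl(2+(\gamma-1)t\bigr),
\]
whose coefficient $\frac{\gamma+1}{2(\gamma-1)}$ is exactly the constant appearing in \eqref{LE}; combined with the strict inequality $\ln(r_2/r_1)<\frac{\gamma+1}{2(\gamma-1)}$ this leaves a positive margin out of which one reads off a uniform lower bound $M^2(r)\ge\underline M^2>0$, strictly improving the hypothesis. The bootstrap then closes and $R_*=R$, so the local solution extends smoothly and uniquely to $[0,R]$; uniqueness throughout the closed subsonic region is inherited from Picard--Lindelöf. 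The overall scheme parallels that of \cite{Bae7}.
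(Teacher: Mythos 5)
Your overall architecture (local Picard--Lindel\"of existence followed by a continuation argument with a priori bounds keeping the solution in a compact subsonic set) is the same as the paper's, and several intermediate steps (bounding $E$ by integrating $(\hat r E)'=h_2$, the self-regulation of $|E|/c^2$ via the growth of $c^2$) are correct. But there is a genuine gap in where and why the length condition \eqref{LE} enters, and this is the crux of the lemma.

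You place \eqref{LE} in the \emph{lower} bound $M^2\ge\underline M^2$, reading it off the partial-fraction coefficient $\tfrac{\gamma+1}{2(\gamma-1)}$ in the antiderivative of $\tfrac{1-t}{t(2+(\gamma-1)t)}$. This is a coincidental match of constants, not the mechanism. In fact, separation of variables gives the \emph{exact identity}
\begin{equation*}
L(M^2(r))-L(M_0^2)=\ln\frac{r_2}{\hat r(r)}+\int_0^r\frac{(\gamma+1)E}{c^2\big(2+(\gamma-1)M^2\big)}\,\mathrm d\nu ,\qquad L(t)=\tfrac12\ln t-\tfrac{\gamma+1}{2(\gamma-1)}\ln\!\big(2+(\gamma-1)t\big),
\end{equation*}
with $L$ increasing on $(0,1)$. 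Rearranged for the lower bound,
\begin{equation*}
\int_{M^2(r)}^{M_0^2}\frac{1-t}{t(2+(\gamma-1)t)}\,\mathrm dt=L(M_0^2)-L(M^2(r))=-\ln\frac{r_2}{\hat r(r)}\;-\;\int_0^r\frac{(\gamma+1)E}{c^2\big(2+(\gamma-1)M^2\big)}\,\mathrm d\nu ,
\end{equation*}
so the $\ln\tfrac{r_2}{\hat r}$ term appears with a \emph{negative} sign, i.e.\ it helps rather than hurts, and the real quantity one must control is the electric integral (which is positive and can be large, depending on $E_0$). The divergence as $M^2\to0$ comes from the $\tfrac12\ln t$ term, not the $\tfrac{\gamma+1}{2(\gamma-1)}$ one, and the lower bound on $M^2$ then follows for \emph{any} $r_1>0$ once $|E|$ is bounded — exactly as the paper does, without \eqref{LE}. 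Your proposed inequality $\int_{M^2(r)}^{M_0^2}(\cdots)\,\mathrm dt\le\ln(r_2/\hat r)+\varepsilon$ is therefore structurally wrong as written.

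Where \eqref{LE} is actually required is in the \emph{upper} bound, i.e.\ in establishing $(M^2)'<0$ so that $M^2\le M_0^2<1$. You dispose of this step by asserting that ``for $K$ sufficiently large the electric term dominates the geometric term,'' but the threshold for dominance is $|E|>\tfrac{c^2(2+(\gamma-1)M^2)}{(\gamma+1)\hat r}$, and both sides scale like $\hat r^{-1}$ as $\hat r\to r_1$, so a flat hypothesis $E\le -K$ is not strong enough and the constant $K$ ends up depending on $\underline M^2$, closing a circle the bootstrap does not obviously break. The paper's resolution is to rewrite $(M^2)'$ as a positive prefactor times $\hat r E+\tfrac{2(\gamma-1)}{\gamma+1}\mathscr B$ and to observe that this quantity obeys a clean first-order inequality whose integration produces the factor $1-\tfrac{2(\gamma-1)}{\gamma+1}\ln\tfrac{r_2}{\hat r}$ multiplying $r_2E_0$; one needs this factor to stay positive on all of $[0,R]$ so that making $E_0$ very negative helps rather than hurts, and that is precisely $\ln\tfrac{r_2}{r_1}<\tfrac{\gamma+1}{2(\gamma-1)}$. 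That algebraic identity for $\hat r E+\tfrac{2(\gamma-1)}{\gamma+1}\mathscr B$ and the sign analysis of its integrated bound are the missing ingredient in your proposal.
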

\begin{proof}
We rewrite \eqref{M2} as
\begin{equation}
\begin{split}
(\bar M^2)'&=\frac{(\gamma+1)\hat{r}^{\frac{\gamma-3}{\gamma+1}}(\bar M^2)^{\frac{2\gamma}{\gamma+1}}}
{\gamma\mathrm{e}^{S_0}\mu_0^{\gamma-1}(1-\bar  M^2)}
\left(\hat{r}\bar  E+\frac{2(\gamma-1)}{\gamma+1}
\left(\frac{\bar U^2}{2}+\frac{\gamma\mathrm{e}^{S_0}\bar \rho^{\gamma-1}}{\gamma-1}\right)\right)\\
&=\frac{(\gamma+1)\hat{r}^{\frac{\gamma-3}{\gamma+1}}(\bar M^2)^{\frac{2\gamma}{\gamma+1}}}
{\gamma\mathrm{e}^{S_0}\mu_0^{\gamma-1}(1-\bar  M^2)}
\left(\hat{r}\bar  E+\frac{2(\gamma-1)}{\gamma+1}\bar {\mathscr{B}}\right).
\end{split}
\end{equation}
According to the local existence and uniqueness theory of ODE system, the initial value problem \eqref{sreq1} and \eqref{-srco-c} with $M_0^2<1 $ admits a unique smooth subsonic solution on $[0,\epsilon_0]$, where $\epsilon_0>0$ is some small constant.
Note that $\bar M^2(r)$ is continuous on $[0,\epsilon_0]$, there exists a constant $M^*$ such that $0<M^*\leq \bar M^2<1$.
It follows from \eqref{rE} that
\begin{equation}\label{reqE}
-\hat{r}C_0\left(\frac{1}{\hat{r}}\right)^{\frac{2}{\gamma+1}}<(\hat{r}\bar E)'<\hat{r}b_0 \quad \hbox{for} \quad r\in[0,\epsilon_0),
\end{equation}
provided $C_0=\mu_0\left(\frac{1}{M^*}\right)^{\frac{1}{\gamma+1}}$. Integrating this inequality over $[0,r)$ for $r\in[0,\epsilon_0)$ deduces that
\begin{equation}
\frac{1}{r_2-r}\left(r_2E_0-\int_{0}^{r}(r_2-\nu)C_0
\left(\frac{1}{r_2-\nu}\right)^{\frac{2}{\gamma+1}}\mathrm{d}\nu\right)
<\bar  E(r)
<\frac{r_2E_0+r_2b_0r-\frac{b_0}{2}r^2}{r_2-r}.
\end{equation}
There exists a constant $C_1>0$ depending only on the data $(r_2,  \gamma,b_0, \rho_0,U_0,P_0)$ such that
\begin{equation}\label{bu}
\frac{r_2E_0-C_1}{r_2-r}\leq \bar  E(r)<\frac{r_2E_0+C_1}{r_2-r}.
\end{equation}
A direct combination of \eqref{J-0}, \eqref{J-00} and \eqref{rE} yields
\begin{equation}
\begin{split}
\left(\hat{r}\bar  E+\frac{2(\gamma-1)}{\gamma+1}\bar {\mathscr{B}}\right)'
&=-\hat{r}\left(\mu_0\left(\frac{1}{\hat{r}^2\bar M^2}\right)^{\frac{1}{\gamma+1}}-b_0\right)
-\frac{2(\gamma-1)}{\gamma+1}\bar E\\
&<r_2b_0-\frac{2(\gamma-1)}{\gamma+1}\frac{r_2E_0-C_1}{r_2-r}.
\end{split}
\end{equation}
Consequently,
\begin{equation}
\begin{split}
\hat{r}\bar E+\frac{2(\gamma-1)}{\gamma+1}\bar {\mathscr{B}}
\leq &r_2E_0+\frac{2(\gamma-1)}{\gamma+1}\mathscr{B}_0
+\int_{0}^{r}\left(r_2b_0-\frac{2(\gamma-1)}{\gamma+1}\cdot\frac{r_2E_0-C_1}{r_2-\nu}\right)\mathrm{d}\nu\\
=&r_2E_0\left(1-\frac{2(\gamma-1)}{\gamma+1}\ln\frac{r_2}{r_2-r}\right)+\frac{2(\gamma-1)}{\gamma+1}\mathscr{B}_0\\
&+\int_{0}^{r}\left(r_2b_0+\frac{2(\gamma-1)}{\gamma+1}\frac{C_1}{r_2-\nu}\right)\mathrm{d}\nu\\
\leq &r_2E_0\left(1-\frac{2(\gamma-1)}{\gamma+1}\ln\frac{r_2}{r_2-r}\right)+C_2 \quad \hbox{for}\quad r\in[0,\epsilon_0],
\end{split}
\end{equation}
where $\mathscr{B}_0=\frac{U_0^2}{2}+\frac{\gamma e^{S_0} \rho_0^{\gamma-1}}{\gamma-1}$ and $C_2>0$ is a constant depending only on the data $(r_2, \gamma, b_0, \rho_0,U_0,P_0)$. If $E_0>0$, then
\begin{equation}
\begin{split}
\hat{r}\bar E+\frac{2(\gamma-1)}{\gamma+1}\bar {\mathscr{B}}
\leq r_2E_0+C_2,
\end{split}
\end{equation}
provided $r_2E_0+C_2$ is positive. It does not guarantee that $(M^2)'<0$ for all $r\in[0,\epsilon_0)$.
Therefore, we assume that $E_0<0$. Then
\begin{equation}
\begin{split}
\hat{r}\bar E+\frac{2(\gamma-1)}{\gamma+1}\bar {\mathscr{B}}
\leq r_2E_0\left(1-\frac{2(\gamma-1)}{\gamma+1}\ln\frac{r_2}{r_2-\epsilon_0}\right)+C_2.
\end{split}
\end{equation}
Choose constants $0<r_1<r_2$ such that \eqref{LE} is satisfied. And set
\begin{equation}
\delta_0:=1-\frac{2(\gamma-1)}{\gamma+1}\ln\frac{r_2}{r_1}
\end{equation}
so that $0<\delta_0<1$ holds. Then
\begin{equation}
r_2E_0\left(1-\frac{2(\gamma-1)}{\gamma+1}\ln\frac{r_2}{r_2-\epsilon_0}\right)+C_2\leq r_2E_0\delta_0+C_2
\end{equation}
whenever $\epsilon_0\in(0,r_2-r_1)$. Set $\check{E}:=-\frac{C_2}{r_2\delta_0}$ depending only on $(r_2, \gamma,b_0, \rho_0,U_0,P_0)$. If $E_0<\check{E}$, then
\begin{equation}\label{mono}
(\bar M^2)'(r)<0\quad \hbox{for}\quad r\in[0,\epsilon_0).
\end{equation}
Consequently, for given $M_0^2<1$ and $E_0<\check{E}$, the initial value problem \eqref{sreq1} and   \eqref{-srco-c}   has a unique smooth subsonic solution $(\bar M^2,\bar E)(r)$ satisfying $(\bar M^2)'<0$ on $[0,\epsilon_0)$. With the aid of \eqref{reqE} and \eqref{bu}, $\bar E$ can be extended up to $r=\epsilon_0$ as
\begin{equation}
(r_2-\epsilon_0)E(\epsilon_0)=r_2E(0)+\lim_{r\to\epsilon_0}\int_{0}^{r}h_2(\nu,
\bar M^2(\nu),\bar E(\nu))\mathrm{d}\nu.
\end{equation}
There exists a constant $C_3>0$ depending only on $( r_1, r_2, \gamma,b_0, \rho_0,U_0,P_0)$ such that
\begin{equation}
\sup_{r\in[0,\epsilon_0]}\vert \bar E(r)\vert\leq C_3.
\end{equation}
Then \eqref{M2} implies that
\begin{equation}
\frac{M_0^2}{M_0^2-1}
(\gamma+1)C_4\frac{(r_2^2)^{\frac{\gamma-1}{\gamma+1}}}{r_1}\leq\frac{(\bar M^2)'}{(\bar M^2)^{\frac{\gamma-1}{\gamma+1}}}<0,
\end{equation}
provided $C_4=\frac{C_3}{\gamma\mathrm{e}^{S_0}\mu_0^{\gamma-1}}$.
Integrating this inequality over $[0,r)$ for $r\in[0,\epsilon_0)$ derives that
\begin{equation}
\frac{2r}{\gamma+1}\frac{M_0^2}{M_0^2-1}
(\gamma+1)C_4\frac{(r_2^2)^{\frac{\gamma-1}{\gamma+1}}}{r_1}\leq
(\bar M^2(r))^{\frac{2}{\gamma+1}}<(M_0^2)^{\frac{2}{\gamma+1}}\quad \hbox{for}\quad r\in(0,\epsilon_0).
\end{equation}
This, together with \eqref{mono}, implies that  $\displaystyle\lim_{r\to\epsilon_0}\bar M^2(r)$ exists, and hence $\bar M^2$ can be extended up to $r=\epsilon_0$. By repeating the above argument, the solution $(\bar M^2,\bar E)(r)$ can be extended up to $r=R$. Therefore, the initial value problem \eqref{sreq1}  and   \eqref{-srco-c} has a unique smooth subsonic solution on $[0,R]$.
\end{proof}
Define
\begin{equation*}
\bar\rho(r)=\frac{J_0}{\hat{r}\bar U},\ \ \bar P(r)=\mathrm{e}^{S_0}\bar\rho^{\gamma}, \ \ \bar{\Phi}(r)=-\int_{0}^r\bar E(t)\mathrm{d}t, \ \ r\in[0,R].
\end{equation*}
Then $(\bar\rho,\bar U,\bar P,\bar{\Phi})(r)$ satisfies \eqref{-sreq} in $[0,R]$. Furthermore,  the pseudo-Bernoulli function $\bar{\mathscr{K}}$ is given by
\begin{eqnarray*}
\bar{\mathscr{K}}(r)=\frac{\bar U^2}{2}+\frac{\gamma e^{S_0}\bar\rho^{\gamma-1}}{\gamma-1}-\bar\Phi,\ \  r\in[0, R].
\end{eqnarray*}
It follows from the third equation in \eqref{J-0} that
 \begin{eqnarray}\label{backK}
\bar{\mathscr{K}}(r)={\mathscr{K}}_0, \ \   {\mathscr{K}}_0= \frac{U_0^2}{2}+\frac{\gamma e^{S_0}\rho_0^{\gamma-1}}{\gamma-1}, \ \  r\in[0, R].
 \end{eqnarray}
\begin{definition}\label{defe1}
$(\bar\rho,\bar  U, \bar P, \bar \Phi)$   is called the  background  solution  associated with the entrance data $(b_0, \rho_0,$\\$ U_{0},P_{0}, E_{0})$.
\end{definition}

\subsection{The stability  problem and main results}\noindent
\par This paper aims to establish the structural stability of the  background solution
under perturbations of suitable boundary conditions on the entrance and exit of the convergent nozzle. Note that $(\tilde{\rho},\tilde{U},\tilde{V},\tilde{P},\tilde{\Phi})$ solves \eqref{EP} if and only if it solves
\begin{equation}\label{EP1}
\begin{cases}
\begin{aligned}
&(\tilde{r}\tilde{\rho} \tilde{U})_{\tilde{r}}+(\tilde{\rho} \tilde{V})_{\tilde{\theta}}=0,\\
&\tilde{\rho} \tilde{U}\tilde{V}_{\tilde{r}}+\frac{1}{\tilde{r}}\tilde{\rho} \tilde{U}\tilde{V}+\frac{1}{\tilde{r}}\tilde{\rho} \tilde{V}\tilde{V}_{\tilde{\theta}}+\frac{1}{\tilde{r}}\tilde{P}_{\tilde{\theta}}
=\frac{1}{\tilde{r}}\tilde{\rho}\tilde{\Phi}_{\tilde{\theta}},\\
&\tilde{U}\tilde{S}_{\tilde{r}}+\frac{1}{\tilde{r}}\tilde{V}\tilde{S}_{\tilde{\theta}}=0,\\
&\tilde{U}\tilde{\mathscr{K}}_{\tilde{r}}+\frac{1}{\tilde{r}}\tilde{V}\tilde{\mathscr{K}}_{\tilde{\theta}}=0,\\
&\tilde{\Phi}_{\tilde{r}\tilde{r}}+\frac{1}{\tilde{r}^2}\tilde{\Phi}_{\tilde{\theta}\tilde{\theta}}
+\frac{1}{\tilde{r}}\tilde{\Phi}_{\tilde{r}}=\tilde{\rho}-\tilde b,
\end{aligned}
\end{cases}
\end{equation}
provided $\tilde{\rho}>0$ and $\tilde{U}\neq0$.
  By using the new variable $(r,\hat{r},\theta)=(r_2-\tilde{r},\tilde{r},\tilde{\theta})$ and setting
\begin{equation}
(\rho, U, V, P, \Phi,b)(r,\theta):=(\tilde{\rho}, -\tilde{U}, \tilde{V}, \tilde{P}, \tilde{\Phi},\tilde b)(\tilde{r},\tilde{\theta}),
\end{equation}
  \eqref{EP1} can be transformed into
\begin{equation}\label{rEP1}
\begin{cases}
\begin{aligned}
&(\hat{r}\rho U)_{r}+(\rho V)_{\theta}=0,\\
&\rho UV_{r}-\frac{1}{\hat{r}}\rho UV+\frac{1}{\hat{r}}\rho VV_{\theta}+\frac{1}{\hat{r}}P_{\theta}
=\frac{1}{\hat{r}}\rho\Phi_{\theta},\\
&US_r+\frac{1}{\hat{r}}VS_{\theta}=0,\\
&U\mathscr{K}_r+\frac{1}{\hat{r}} V\mathscr{K}_{\theta}=0,\\
&\Phi_{rr}+\frac{1}{\hat{r}^2}\Phi_{\theta\theta}
-\frac{1}{\hat{r}}\Phi_{r}=\rho-b,
\end{aligned}
\end{cases}
\end{equation}
where the pseudo-Bernoulli function $\mathscr{K}$ is given by
\begin{equation}\label{realK}
\mathscr{K}=\frac{U^2+V^2}{2}+\frac{\gamma \mathrm{e}^{S}\rho^{\gamma-1}}{\gamma-1}-\Phi.
\end{equation}
The two-dimensional convergent nozzle $\tilde\Omega$  becomes
\begin{equation*}
\Omega=\Big\{(r,\theta):0<r<R=r_2-r_1, -\theta_0<\theta<\theta_0\Big\}.
\end{equation*}
The boundary $\partial\Omega$ consists of
\begin{equation*}
\begin{split}
\Gamma_{en}&:=\{(r,\theta):r=0,-\theta_0<\theta<\theta_0\},\\
\Gamma_{ex}&:=\{(r,\theta):r=R,-\theta_0<\theta<\theta_0\},\\
\Gamma_\omega&:=\{(r,\theta):0<r<R,\theta=\pm\theta_0\}.
\end{split}
\end{equation*}
\par The main concern in this paper is to solve the following problem.
\begin{problem}\label{pro1}
Given functions $(V_{en}, \Phi_{en}, \mathscr{K}_{en}, S_{en}, \Phi_{ex}, P_{ex}, b)$ sufficiently close to $(0, \bar{\Phi}(0), \mathscr{K}_{0},S_0,$\\$ \bar\Phi(R), \bar P(R), b_0)$, find a solution $(\rho,U,V,P,\Phi)$ to the system \eqref{rEP1} in $\Omega$, subject to the boundary conditions
\begin{equation}\label{BC}
\begin{cases}
\begin{aligned}
&(V,\mathscr{K},S,\Phi)=(V_{en},\mathscr{K}_{en},S_{en},\Phi_{en}),\ \ &{\rm{on}}\ \ \Gamma_{en},\\
&(P,\Phi)=(P_{ex},\Phi_{ex}),\ \ &{\rm{on}}\ \ \Gamma_{ex},\\
&V=\partial_{\theta}\Phi=0, \ \ &{\rm{on}}\ \ \Gamma_\omega.
\end{aligned}
\end{cases}
\end{equation}
with $U^2+V^2<\frac{\gamma P}{\rho}$, i.e., the flow is subsonic in $\overline{\Omega}$.
\end{problem}
Before we state the  main theorem, we first introduce some H\"{o}lder spaces and their norms.  Set  $ \mathbf{y}=(r,\theta) $ and  $ \hat{\mathbf{y}}=(\hat r,\hat \theta) $.  For  any non-negative  integer $ k$ and $ \alpha\in(0,1) $, we define the standard H\"{o}lder norms by
    \begin{equation*}
    \begin{aligned}
     \|u\|_{k;\overline\Omega}:=\sum_{|\beta|\leq k}\sup_{\mathbf{y}\in \Omega}|D^{\beta}u( {\mathbf{y}})|; \
   \|u\|_{k,\alpha;\overline\Omega}:=\|u\|_{k;\overline\Omega}+ \sum_{|\beta|=k}\sup_{\mathbf{y},\hat {\mathbf{y}}\in \Omega,\mathbf{y}\neq \hat {\mathbf{y}}}
  \frac{|D^{\beta}u(\mathbf{y})-D^{\beta}u(\hat {\mathbf{y}})|}{|\mathbf{y}-\hat {\mathbf{y}}|^{\alpha}},
  \end{aligned}
  \end{equation*}
 where $D^{\beta}$ denotes $\p_{r}^{\beta_1}\p_{\theta}^{\beta_2} $ for  $\beta= (\beta_1,\beta_2)$ with $\beta_j\in \mathbb{Z}_+$ and $|\beta|=\sum_{j=1}^2\beta_j$. $ C^{k,\alpha}(\overline\Omega) $ denotes the completion of the set of all smooth functions whose $\|\cdot\|_{k,\alpha;\overline\Omega}$ norms are finite. Furthermore, for a vector function $ {\bf u}=(u_1,u_2,\cdots,u_n) $, define
\begin{equation*}
   \|{\bf u}\|_{k,\alpha;\overline\Omega}:=\sum_{i=1}^{n}\| u_i\|_{k,\alpha;\overline\Omega}.
\end{equation*}
Next, we introduce the following notation: a constant $C$ is said to depend  only on the background
 data if $C$ is chosen depending only on $  (  \gamma,b_0, \rho_0,U_0,P_0,E_0,r_1, r_2, \theta_0)$.
\begin{theorem}\label{thm1}
Let $(\bar\rho,\bar U,\bar P,\bar{\Phi})$ be the background  subsonic solution defined in Definition \ref{defe1}. For given functions $(V_{en}, \Phi_{en}, \mathscr{K}_{en}, S_{en}, \Phi_{ex}, P_{ex}, b)\in \left(C^{2,\alpha}([-\theta_0,\theta_0])\right)^6\times C^{0,\alpha}(\overline{\Omega})$, set
\begin{equation}\label{sigmar}
\begin{split}
\sigma(V_{en}, \Phi_{en}, \mathscr{K}_{en}, S_{en}, \Phi_{ex}, P_{ex}, b):=&
\Vert b-b_0\Vert_{0,\alpha;\overline{\Omega}}+\Vert (V_{en}, \Phi_{en}, \mathscr{K}_{en}, S_{en}, \Phi_{ex}, P_{ex})\\
&-(0, \bar{\Phi}(0), \mathscr{K}_{0}, S_0, \bar\Phi(R), \bar P(R))\Vert_{2,\alpha;[-\theta_0,\theta_0]}.
\end{split}
\end{equation}
There exist positive constants $\sigma_*$ and $\mathcal{C}_*$ depending only on the background data and $\alpha$ such that if $(V_{en}, \Phi_{en}, \mathscr{K}_{en}, S_{en}, \Phi_{ex}, P_{ex}, b)$ satisfies
\begin{equation}
\sigma(V_{en}, \Phi_{en}, \mathscr{K}_{en}, S_{en}, \Phi_{ex}, P_{ex}, b)\leq\sigma_*
\end{equation}
and the compatibility conditions
\begin{equation}\label{compati}
V_{en}(\pm\theta_0)=\Phi'_{en}(\pm\theta_0)=\mathscr{K}'_{en}(\pm\theta_0)
=S'_{en}(\pm\theta_0)=\Phi'_{ex}(\pm\theta_0)=P'_{ex}(\pm\theta_0)=0,
\end{equation}
 then Problem \ref{pro1} has a unique subsonic solution $(\rho,U,V,P,\Phi)\in \left(C^{2,\alpha}(\overline{\Omega})\right)^5$ satisfying the estimate
\begin{equation}
\Vert (\rho,U,V,P,\Phi)-(\bar\rho,\bar U,0,\bar P,\bar{\Phi})\Vert_{2,\alpha;\overline{\Omega}}
\leq \mathcal{C}_*\sigma(V_{en}, \Phi_{en}, \mathscr{K}_{en}, S_{en}, \Phi_{ex}, P_{ex}, b).
\end{equation}
\end{theorem}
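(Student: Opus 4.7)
The approach is a deformation-curl-Poisson reformulation followed by a contraction-mapping iteration. Since $S$ and $\mathscr{K}$ are transported by $\mathbf{u}$ in \eqref{rEP1}, and since a Crocco-type identity (obtained by taking the curl of the momentum equations and using the $\mathscr{K}$-equation) turns the vorticity of $\mathbf{u}$ into a quantity transported along streamlines, I would invert \eqref{realK} to recover $\rho = \rho(|\mathbf{u}|^2, S, \mathscr{K}, \Phi)$ algebraically in the subsonic regime. Problem \ref{pro1} then splits into three transport equations for $(S, \mathscr{K}, \omega)$ driven by the entrance data $(S_{en}, \mathscr{K}_{en}, V_{en})$, a first-order div-curl system for $(U,V)$ coming from the continuity equation and the definition of $\omega$, and the Poisson equation for $\Phi$; these last two couple strongly through the algebraic formula for $\rho$ and form the deformation-curl-Poisson elliptic block. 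The boundary data of Problem \ref{pro1} split naturally: $V_{en}$ and $V|_{\Gamma_\omega}=0$ give Dirichlet data for $V$ on $\Gamma_{en}\cup\Gamma_\omega$, $P_{ex}$ reduces to an algebraic (Dirichlet-type) condition on $\Gamma_{ex}$ via $\rho$, and $(\Phi_{en},\Phi_{ex})$ together with $\partial_\theta\Phi|_{\Gamma_\omega}=0$ close the Poisson piece.

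The iteration proceeds as follows. In a small $C^{2,\alpha}$ ball around $(\bar\rho,\bar U,0,\bar P,\bar\Phi)$, given iterates $(U^{(n)},V^{(n)})$, first solve the three transport equations along their characteristics; smallness of $V_{en}$ combined with $\bar U>0$ makes characteristics transversal to $\Gamma_{en}\cup\Gamma_{ex}$ and tangential to $\Gamma_\omega$ (because $V|_{\Gamma_\omega}=0$), while \eqref{compati} gives $C^{2,\alpha}$ matching at the corners. Plug the resulting $(S,\mathscr{K},\omega)$ into the div-curl-Poisson block, linearize around the background, and solve for $(U^{(n+1)},V^{(n+1)},\Phi^{(n+1)})$. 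The nonlinear remainders and transport sources are quadratic in the perturbation, so a linear a priori estimate for the elliptic block combined with Schauder theory closes a contraction, yielding existence, uniqueness, and the bound in $\sigma$.

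The main obstacle, as the abstract signals, is the a priori estimate for this linearized elliptic boundary value problem. After linearization the continuity-plus-curl subsystem has the anisotropic principal symbol $(1-\bar M^2)\partial_r^2 + \hat r^{-2}\partial_\theta^2$, elliptic because $\bar M^2<1$, but the convergent geometry contributes zeroth-order terms of unfavorable sign through the factors $1/\hat r$; in a pure Euler problem on a convergent channel these terms are precisely what drives the flow toward sonic and obstructs solvability, which is why the size restriction \eqref{LE} appears in Lemma \ref{Tm}. The stabilizing mechanism is the Poisson coupling: substituting $\rho(|\mathbf{u}|^2, S,\mathscr{K},\Phi)$ into $\Delta\Phi=\rho-b$ produces a zeroth-order term $-(\partial_\Phi\bar\rho)\Phi$ of favorable sign in the linearized Poisson equation, and the hypothesis $E_0<\check{E}$ from Lemma \ref{Tm} (so that $(\bar M^2)'<0$ throughout the nozzle) keeps this coefficient large enough to dominate the geometric instability. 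My plan for the estimate is a multiplier-energy argument: multiply the linearized continuity and vorticity equations by well-chosen combinations of $(U,V,\partial_\theta U,\partial_\theta V,\Phi)$, integrate over $\Omega$, use the homogeneous boundary conditions to eliminate boundary terms, and exploit the sign structure together with \eqref{LE} to absorb the indefinite $1/\hat r$ contributions into the coercive Poisson term. This produces an $H^1$-type bound with constants depending only on the background data, which standard Schauder theory for mixed boundary value problems then lifts to $C^{2,\alpha}$, completing the argument.
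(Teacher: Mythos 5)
Your high-level strategy — invert the Bernoulli relation to express $\rho$ algebraically, split off transport equations for the hyperbolic invariants, solve a linearized elliptic block coupled with the Poisson equation, and close by contraction — matches the paper, and your identification of the Poisson coupling as the stabilizing mechanism against the convergent geometry is a correct physical reading. But there are two substantive gaps in the a priori estimate, which is exactly where the proof is won or lost.

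First, the decomposition itself is slightly off. In the deformation-curl-Poisson framework the vorticity is \emph{not} a separately transported quantity: taking the $\theta$-derivative of the Bernoulli relation and using the momentum equation, one obtains (cf.\ \eqref{2vor})
\begin{equation*}
U\bigl((\hat r V)_r - U_\theta\bigr)=\frac{\mathrm{e}^{S}H^{\gamma-1}}{\gamma-1}\,S_\theta - \mathscr{K}_\theta,
\end{equation*}
so the curl of the velocity is an \emph{algebraic} function of $S_\theta$, $\mathscr{K}_\theta$ and the current iterate; only $S$ and $\mathscr{K}$ are transported. Your proposal to transport $\omega$ separately via a Crocco identity is closer to the Helmholtz/stream-function decompositions of Bae--Duan--Xie, not the deformation-curl-Poisson decomposition the paper uses; it would require delicate boundary decompositions for a vector potential, which is precisely what this reformulation is designed to avoid.

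Second, and more importantly, the multiplier-energy argument you sketch does not identify what actually makes the linearized elliptic system coercive. After passing to a potential function $\psi$ (so that the curl equation becomes $\partial_r\psi=\check{\mathcal U}$, $\partial_\theta\psi=\hat r\check{\mathcal V}$), the linearized system has the form $\mathcal L_1(\varphi,\Psi)=\partial_r(a_{11}\partial_r\varphi)+\partial_\theta(a_{22}\partial_\theta\varphi)+\partial_r(b_1\Psi)$, $\mathcal L_2(\varphi,\Psi)=\partial_r(\hat r\partial_r\Psi)+\hat r^{-1}\partial_\theta^2\Psi+c_1\partial_r\varphi+c_2\Psi$, and in the bilinear form the cross terms contribute $b_1\Psi\varphi_r - c_1\varphi_r\Psi$. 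The entire estimate hinges on the algebraic identity $b_1=c_1$ (both equal to $\hat r\bar\rho\bar U/\bar c^2$; see Lemma~\ref{lem2}), which makes these cross terms \emph{cancel exactly} when you test with $(\varphi,\Psi)$ itself. What remains is $\iint(a_{11}\varphi_r^2+a_{22}\varphi_\theta^2+\hat r\Psi_r^2+\hat r^{-1}\Psi_\theta^2-c_2\Psi^2)$, and since $a_{11}=\hat r\bar\rho(1-\bar M^2)>0$, $a_{22}=\bar\rho>0$, and $c_2=-\hat r\bar\rho/\bar c^2<0$, every term is nonnegative and Lax--Milgram applies directly — no choice of multipliers, no absorption of indefinite $1/\hat r$ terms, no invocation of \eqref{LE}. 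Indeed \eqref{LE} and $E_0<\check E$ are needed only to produce a background subsonic solution on the whole interval $[0,R]$; once $\bar M^2<1$ everywhere, the linearized estimate requires nothing further about the geometry. Without noticing the cancellation $b_1=c_1$, your multiplier scheme would leave an uncontrolled cross term $\varphi_r\Psi$, and the $H^1$ estimate would not close.
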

\begin{remark}
{\it The background solution $(\bar\rho,\bar U,\bar P,\bar{\Phi})$ given in Definition \ref{defe1} solves Problem \ref{pro1} with $\sigma_*=0$. Therefore, Problem \ref{pro1} can be regarded as the structural stability problem for the background solution under small perturbations of the boundary data.}
\end{remark}
\begin{remark}
{\it  There are several different decomposition for the steady  Euler-Poisson system in the subsonic region developed by many  researchers from different points of views. In  \cite{Bae2,Bae5},
the authors   utilized the Helmholtz decomposition of the velocity field  to establish the structural stability of  subsonic Euler-Poisson flows with nonzero vorticity in flat nozzles. By the stream function formulation,  the unique existence and stability of subsonic flow for  the steady self-gravitating  Euler-Poisson system
  were obtained in \cite{Bae4,Cao,Duan,WZ25}. However, due to the geometry
structure of the convergent  nozzle,  it turns out to be difficult to obtain  a priori $H^1$ estimate for the associated  elliptic system through the Helmholtz decomposition and the stream function formulation. Here we use the deformation-curl-poisson decomposition developed in \cite{Weng} to
effectively decouple the hyperbolic and elliptic modes in the steady Euler-Poisson system. One of the crucial advantages for this decomposition is to yield a more tractable structure of the linearized  second order elliptic system for the velocity potential
 and the electrostatic potential, which  enables us to establish the well-posedness of the associated nonlinear boundary value problem. Another one is that the solution  obtained in Theorem \ref{thm1} has the same regularity
for the velocity, the density, the pressure and  the electric  potential, which is different from \cite{Bae2,Bae4,Bae5}.}
\end{remark}

The rest of this paper is organized as follows. In Section 2, we first employ the deformation-curl-Poisson decomposition to   reformulate the steady Euler-Poisson system. Then we linearize the nonlinear  system to obtain   a second order linear elliptic system. Furthermore, the well-posedness of the boundary value problem of this system is established via a priori estimates, in which a special structure given by Lemma \ref{lem2} is the key point. In Section 3, we establish the nonlinear structural stability of the background subsonic flow under perturbations of boundary conditions by using the iteration method and the estimates for the linearized elliptic system.

\section{The linearized problem}\noindent
\par In this section, the deformation-curl-Poisson decomposition is utilized   to deal with hyperbolic-elliptic coupled structure in the steady Euler-Poisson system. We introduce a boundary value problem of a second order linearized elliptic system with transport equations for entropy and pseudo-Bernoulli's invariant, subject to the nonlinear boundary conditions for Problem \ref{pro1}. Then a priori estimates are established to obtain the well-posedness of the linearized problem.
\subsection{The deformation-curl-Poisson decomposition}\noindent
\par Firstly, the   pseudo-Bernoulli quantity and  the entropy   are transported by the following equations:
\begin{align}\label{2-1}
 &U\mathscr{K}_r+\frac{1}{\hat{r}} V\mathscr{K}_{\theta}=0, \\\label{2-x}
   &US_r+\frac{1}{\hat{r}}VS_{\theta}=0.
 \end{align}
 Furthermore, it follows from \eqref{realK} that the density can be represented as
\begin{equation}\label{2rho}
\rho=H(S,\mathscr{K},U,V,\Phi)=
\left(\frac{\gamma-1}{\gamma\mathrm{e}^{S}}\left(\mathscr{K}+\Phi-\frac{U^2+V^2}{2}\right)\right)
^{\frac{1}{\gamma-1}}.
\end{equation}
Substituting \eqref{2rho} into the continuity equation in \eqref{rEP1} leads to
\begin{equation}\label{2rho-de}
(\hat{r}H(S,\mathscr{K},\Phi,U,V) U)_{r}+(H(S,\mathscr{K},\Phi,U,V) V)_{\theta}=0.
\end{equation}
In addition, one can follow from the second equation in \eqref{rEP1} to derive that
\begin{equation}\label{2vor}
U((\hat{r}V)_r-U_{\theta})=\frac{\mathrm{e}^{S}H^{\gamma-1}(S,\mathscr{K},U,V,\Phi)}
{\gamma-1}S_{\theta}-\mathscr{K}_{\theta}.
\end{equation}
\par Therefore, if a smooth flow does not contain the vacuum and the stagnation points, then the system \eqref{rEP1} is equivalent to the following system:
\begin{equation}\label{r1EP1}
\begin{cases}
\begin{aligned}
&(\hat{r}H(S,\mathscr{K},\Phi,U,V) U)_{r}+(H(S,\mathscr{K},\Phi,U,V) V)_{\theta}=0,\\
&(\hat{r}\Phi_{r})_r+\frac{1}{\hat{r}}\Phi_{\theta\theta}
=\hat{r}(H(S,\mathscr{K},\Phi,U,V)-b),\\
&U((\hat{r}V)_r-U_{\theta})=\frac{\mathrm{e}^{S}H^{\gamma-1}(S,\mathscr{K},\Phi,U,V)}
{\gamma-1}S_{\theta}-\mathscr{K}_{\theta},\\
&U\mathscr{K}_r+\frac{1}{\hat{r}} V\mathscr{K}_{\theta}=0,\\
&US_r+\frac{1}{\hat{r}}VS_{\theta}=0.\\
\end{aligned}
\end{cases}
\end{equation}
Set
\begin{equation}\label{newva}
\begin{split}
(\mathcal{U},\mathcal{V},\check{\Phi},\mathcal{S},\mathcal{K})(r,\theta)
=(U,V,\Phi,S,\mathscr{K})(r,\theta)-(\bar U(r),0,\bar\Phi(r),S_0,\mathscr{K}_0).
\end{split}
\end{equation}
Define the solution space $\mathcal{J}(\delta)$ which consists of $\mathbf{V}=(\mathcal{U},\mathcal{V},\check{\Phi},\mathcal{S},\mathcal{K})\in \left(C^{2, \alpha}(\overline\Omega)\right)^5$  satisfying the estimate
\begin{equation}\label{Ves}
\Vert{\bf V}\Vert_{2,\alpha;\overline\Omega}:=\Vert (\mathcal{U},\mathcal{V},\check{\Phi},\mathcal{S},\mathcal{K})\Vert_{2, \alpha;\overline\Omega}\leq \delta
\end{equation}
and the compatibility conditions
\begin{equation}\label{Vco}
\mathcal{V}(r,\pm \theta_0)=\partial_\theta(\mathcal{U},\check{\Phi},\mathcal{S},\mathcal{K})(r,\pm \theta_0)=0\ \ \hbox{for} \ \ r\in[0,R],
\end{equation}
where $\delta>0$ to be determined later.
 Given any $\mathbf{V}^{\sharp}=(\mathcal{U}^{\sharp},\mathcal{V}^{\sharp},\check{\Phi}^{\sharp},\mathcal{S}^{\sharp},\mathcal{K}^{\sharp}) \in \mathcal{J}(\delta)$, we will construct an iterative procedure that generates a new $\mathbf{V} \in \mathcal{J}(\delta)$, and thus we define a mapping $\mathcal{T}$ from $\mathcal{J}(\delta)$ to itself by choosing a suitably small positive constant $\delta$.
\par We first solve the transport equations for  $\mathcal{S}$ and $\mathcal{K}$ as follows:
\begin{equation}\label{St}
\begin{cases}
\begin{aligned}
&\bigg(\partial_{r}+\frac{\mathcal{V}^{\sharp}}{\hat{r}(\bar U+\mathcal{U}^{\sharp})}\partial_{\theta} \bigg) \mathcal{S}=0,\\
&\mathcal{S}(0,\theta)=S_{en}(\theta)-S_0,
\end{aligned}
\end{cases}
\end{equation}
and
\begin{equation}\label{Kt}
\begin{cases}
\begin{aligned}
&\bigg(\partial_{r}+\frac{\mathcal{V}^{\sharp}}{\hat{r}(\bar U+\mathcal{U}^{\sharp})}\partial_{\theta} \bigg) \mathcal{K}=0,\\
&\mathcal{K}(0,\theta)=\mathscr{K}_{en}(\theta)-\mathscr{K}_0.
\end{aligned}
\end{cases}
\end{equation}
For any point $(r,\theta)\in \overline\Omega$, the functions $\mathcal{S}$ and $\mathcal{K}$ are conserved along the trajectory determined by the ODE equation
\begin{equation}\label{ode}
\begin{cases}
\begin{aligned}
&\frac{\mathrm{d}}{\mathrm{d}s}\hat \eta(s;r,\theta)=
\frac{\mathcal{V}^{\sharp}}{\hat{r}(\bar U+\mathcal{U}^{\sharp})}(s,\hat \eta(s;r,\theta)),\\
&\hat \eta(r;r,\theta)=\theta.
\end{aligned}
\end{cases}
\end{equation}
Since $\mathbf{V}^{\sharp}\in \mathcal{J}(\delta)$, then we can obtain  that $ \hat\eta(0;r,\theta)\in C^{2,\alpha }(\overline{\Omega})$ satisfies
\begin{equation*}
\|\hat\eta(0;r,\theta) \|_{{2,\alpha };  \overline{\Omega} } \leq C,
\end{equation*}
where $C>0 $ is a constant depending only on the background data and $ \alpha $.
Thus it holds that
\begin{equation}\label{3-7}
\begin{cases}
\begin{aligned}
&\mathcal{S}(r,\theta)=S_{en}(\hat \eta(0;r,\theta))-S_0, \\
&\mathcal{K}(r,\theta)=\mathscr{K}_{en}(\hat \eta(0;r,\theta))-\mathscr{K}_0.
\end{aligned}
 \end{cases}
\end{equation}
One can further verify that $(\mathcal{S},\mathcal{K})$ satisfies following compatibility conditions:
\begin{equation}\label{3-8}
\partial_\theta(\mathcal{S},\mathcal{K})(r,\pm \theta_0)=0\ \ \hbox{for} \ \ r\in[0,R].
\end{equation}
\par Next, it follows from \eqref{r1EP1} that
 the functions $\mathcal{U}$, $\mathcal{V}$ and $\check{\Phi}$ satisfy the following system
\begin{equation}\label{elli1}
\begin{cases}
\begin{aligned}
&\partial_{r}(A_{11}(r)\mathcal{U})+\partial_{\theta}(
A_{22}(r)\mathcal{V})+\partial_{r}(b_1(r)\check{\Phi})=\partial_r f_1+\partial_{\theta} f_2,\\
&\partial_r(\hat{r}\check{\Phi}_{r})+\frac{1}{\hat{r}}\partial_{\theta}^2\check{\Phi}
+c_1(r)\mathcal{U}+c_2(r)\check{\Phi}
=f_3,\\
&\partial_{r}(\hat{r}\mathcal{V})-\partial_{\theta}\mathcal{U}=f_4,
\end{aligned}
\end{cases}
\end{equation}
where
\begin{equation}\label{A0coff}
\begin{split}
A_{11}(r)=&\hat{r}\bar\rho(1-\bar M^2),\quad A_{22}(r)=\hat{r}\bar \rho,\quad
b_1(r)=\frac{\hat{r}\bar\rho\bar U}{\bar c^2},
\quad c_1(r)=\frac{\hat{r}\bar\rho\bar U}{\bar c^2},\quad c_2(r)=-\frac{\hat{r}\bar\rho}{\bar c^2},
\end{split}
\end{equation}
and
\begin{equation}\label{ogif}
\begin{split}
f_1=&-\bigg(\hat{r}H(\mathcal{S}+S_0,\mathcal{K}+\mathscr{K}_0,\mathcal{U}^{\sharp}+\bar U,\mathcal{V}^{\sharp},\check{\Phi}^{\sharp}+\bar\Phi)(\mathcal{U}^{\sharp}+\bar U)-\hat{r} H(S_0,\mathscr{K}_0,\bar U,0,\bar\Phi)\bar U\bigg)\\
&-\hat{r}\bar\rho(1-\bar M^2)\mathcal{U}^{\sharp}-\frac{\hat{r}\bar\rho\bar U}{\bar c^2}\check{\Phi}^{\sharp},\\
f_2=&-\hat{r}H(\mathcal{S}+S_0, \mathcal{K}+\mathscr{K}_0,\mathcal{U}^{\sharp}+\bar U,\mathcal{V}^{\sharp},\check{\Phi}^{\sharp}+\bar\Phi)\mathcal{V}^{\sharp}
-\hat{r}\bar\rho\mathcal{V}^{\sharp},\\
f_3=&\hat{r}(H(\mathcal{S}+S_0, \mathcal{K}+\mathscr{K}_0,\mathcal{U}^{\sharp}+\bar U,\mathcal{V}^{\sharp},\check{\Phi}^{\sharp}+\bar\Phi)-H(S_0,\mathscr{K}_0,\bar U,0,\bar\Phi)-(b-b_0))\\
& -\frac{\hat{r}\bar\rho\bar U}{\bar c^2}\mathcal{U}^{\sharp}+\frac{\hat{r}\bar\rho}{\bar c^2}\check{\Phi}^{\sharp},\\
f_4=&\frac{1}{\mathcal{U}^{\sharp}+\bar U}\bigg(\frac{\mathrm{e}^{\mathcal{S}+S_0}H^{\gamma-1}(\mathcal{S}+S_0,\mathcal{K}+\mathscr{K}_0,
\mathcal{U}^{\sharp}+\bar U,\mathcal{V}^{\sharp},\check{\Phi}^{\sharp}+\bar\Phi)}{\gamma-1}
\mathcal{S}_{\theta}-\mathcal{K}_{\theta}\bigg).
\end{split}
\end{equation}
\par Now we compute the boundary conditions $(\mathcal{U},\mathcal{V},\check{\Phi})$ corresponding to \eqref{BC}. Owing to \eqref{realK}, \eqref{backK}, one obtains
\begin{equation}\label{realK1}
U^2+V^2=2\left(\mathscr{K}+\Phi-\frac{\gamma \mathrm{e}^{S}\rho^{\gamma-1}}{\gamma-1}\right).
\end{equation}
This together with \eqref{newva} yields
\begin{equation}
\begin{split}
U^2+V^2=(\mathcal{U}+\bar U)^2+\mathcal{V}^2
=\mathcal{U}^2+2\bar U\mathcal{U}+\bar U^2+\mathcal{V}^2.
\end{split}
\end{equation}
Direct calculations deduce that
\begin{equation*}
\begin{split}
\mathcal{U}=\frac{1}{\bar U}\left(\mathscr{K}+\Phi-\frac{\gamma\mathrm{e}^{S}\rho^{\gamma-1}}{\gamma-1}\right)
-\frac{1}{\bar U}\left(\mathscr{K}_0+\bar\Phi-\frac{\gamma\mathrm{e}^{S_0}\bar\rho^{\gamma-1}}{\gamma-1}
\right)
-\frac{\mathcal{U}^2+\mathcal{V}^2}{2\bar U}.
\end{split}
\end{equation*}
Thus there holds
\begin{equation}\label{ogih}
\begin{split}
\mathcal{U}=h(\theta;\mathcal{U}^{\sharp},\mathcal{V}^{\sharp},\mathcal{K},\mathcal{S}), \ \ {\rm{on}}\ \ \Gamma_{ex},\\
\end{split}
\end{equation}
where
\begin{equation*}
\begin{split}
h(\theta;\mathcal{U}^{\sharp},\mathcal{V}^{\sharp},\mathcal{K},\mathcal{S})&=\frac{1}{\bar U(R)}\bigg(
\mathcal{K}(R,\theta)+\bigg(
\Phi_{ex}(\theta)-\frac{\gamma}{\gamma-1}(\mathrm{e}^{S_0+\mathcal{S}(R,\theta)})
^{\frac{1}{\gamma}}P_{ex}^{1-\frac{1}{\gamma}}(\theta)\bigg)\\
&\ \ -\bigg(\bar\Phi(R)-\frac{\gamma}{\gamma-1}(\mathrm{e}^{S_0})^{\frac{1}{\gamma}}\bar P^{1-\frac{1}{\gamma}}(R)\bigg)\bigg)
 -\frac{\left((\mathcal{U}^{\sharp})^2+(\mathcal{V}^{\sharp})^2\right)(R,\theta)}
{2\bar U(R)}.
\end{split}
\end{equation*}
Hence the boundary conditions for $(\mathcal{U},\mathcal{V},\check{\Phi})$ are transformed into
\begin{equation}\label{aBC1}
\begin{cases}
\begin{aligned}
&(\mathcal{V},\check{\Phi})=(V_{en},\Phi_{en}-\bar \Phi(0)),\ \ &{\rm{on}}\ \ \Gamma_{en},\\
&(\mathcal{U},\check{\Phi})=(h,\Phi_{ex}-\bar \Phi(R)),\ \ &{\rm{on}}\ \ \Gamma_{ex},\\
&{\mathcal{V}=\partial_{\theta}\check{\Phi}=0}, \ \ &{\rm{on}}\ \ \Gamma_\omega.
\end{aligned}
\end{cases}
\end{equation}
 Use the abbreviation
    \begin{equation*}
    \sigma_p=\sigma(V_{en}, \Phi_{en}, \mathscr{K}_{en}, S_{en}, \Phi_{ex}, P_{ex}, b),
    \end{equation*}
    where $\sigma(V_{en}, \Phi_{en}, \mathscr{K}_{en}, S_{en}, \Phi_{ex}, P_{ex}, b)$  is  defined in \eqref{sigmar}. Then a direct computation shows that
\begin{equation}\label{aBC1-e}
\Vert (f_1,f_2)\Vert_{2,\alpha;\overline\Omega}
+\Vert f_3\Vert_{0,\alpha;\overline\Omega}
+\Vert f_4\Vert_{1,\alpha;\overline\Omega}
+\Vert h\Vert_{2,\alpha;[-\theta_0,\theta_0]} \leq C(\delta^2+\sigma_p),
\end{equation}
and
\begin{equation}\label{compatibilityf}
f_2(r,\pm \theta_0)=\partial_{\theta}^2f_2(r,\pm \theta_0)=\partial_{\theta}f_1(r,\pm \theta_0)=f_4(r,\pm \theta_0)=h'(\pm \theta_0)=0 \ \ \hbox{for} \ \ r\in[0,R].
\end{equation}
Here  $C>0 $ is a constant depending only on the background data and $ \alpha $.
\par In the following, we first consider the following problem
\begin{equation}\label{extend}
\begin{cases}
\begin{aligned}
&\partial_r(\hat{r}\phi_r)+\partial_\theta^2\phi=f_4,\\
&\phi_r(0,\theta)=0, \ \
\phi(R,\theta)=0, \\
&\phi(r,\pm \theta_0)=0.
\end{aligned}
\end{cases}
\end{equation}
To deal with the singularity near the corner, one can use the standard symmetric extension technique to extend $\phi$ and $f_4$ as
\begin{equation}\label{4-26}
(\phi^e,f_4^e)(r,\theta)=
\begin{cases}
(\phi,f_4)(r,\theta), \ \ &{\rm{for}} \ \ (r,\theta)\in [0,R]\times[-\theta_0,\theta_0],\\
-(\phi,f_4)(r,-\theta), \ \ &{\rm{for}} \ \ (r,\theta)\in [0,R]\times(-3\theta_0,-\theta_0),\\
-(\phi,f_4)(r,2\theta_0-\theta), \ \ &{\rm{for}} \ \ (r,\theta)\in [0,R]\times(\theta_0,3\theta_0).
\end{cases}
\end{equation}
Then $\phi^e$ satisfies
\begin{equation}\label{extend1}
\begin{cases}
\begin{aligned}
&\partial_r(\hat{r}\partial_r\phi^e)+\partial_\theta^2\phi^e=f_4^e,\\
&\partial_r\phi^e(0,\theta)=\phi^e(R,\theta)=0,\\
&\phi^e(r,-3\theta_0)=\phi^e(r,3\theta_0)=0.
\end{aligned}
\end{cases}
\end{equation}
Consequently, the regularity of $\phi$  can be improved to be $C^{3,\alpha}(\overline\Omega)$ with the estimate
\begin{equation}\label{regu}
\Vert\phi\Vert_{3,\alpha;\overline\Omega}\leq C\Vert f_4\Vert_{1, \alpha;\overline\Omega},
\end{equation}
provided $C$ is a constant. Define
\begin{equation}\label{transUV1}
\check{\mathcal{U}}=\mathcal{U}+ \phi_{\theta},\quad
\check{\mathcal{V}}=\mathcal{V}- \phi_r.
\end{equation}
Then \eqref{elli1} and \eqref{aBC1} are transformed into
\begin{equation}\label{elli2}
\begin{cases}
\begin{aligned}
&\partial_{r}(A_{11}(r)\check{\mathcal{U}}
)+\partial_{\theta}(A_{22}(r)\check{\mathcal{V}})
+\partial_{r}(b_1(r)\check{\Phi})=\partial_r \mathfrak{f}_1+\partial_{\theta} \mathfrak{f}_2,\\
&\partial_r(\hat{r}\check{\Phi}_{r})+\frac{1}{\hat{r}}\partial_{\theta}^2\check{\Phi}
+c_1(r)\check{\mathcal{U}}
+c_2(r)\check{\Phi}=\mathfrak{f}_3,\\
&\partial_{r}(\hat{r}\check{\mathcal{V}})-\partial_{\theta}\check{\mathcal{U}}=0,
\end{aligned}
\end{cases}
\end{equation}
with the boundary conditions
\begin{equation}\label{aBC2}
\begin{cases}
\begin{aligned}
&(\check{\mathcal{V}},\check{\Phi})=(V_{en},\Phi_{en}-\bar \Phi(0)),\ \ &{\rm{on}}\ \ \Gamma_{en},\\
&(\check{\mathcal{U}},\check{\Phi})=({h},\Phi_{ex}-\bar \Phi(R)),\ \ &{\rm{on}}\ \ \Gamma_{ex},\\
&{\check{\mathcal{V}}=\partial_{\theta}\check{\Phi}=0}, \ \ &{\rm{on}}\ \ \Gamma_\omega,
\end{aligned}
\end{cases}
\end{equation}
where
\begin{equation}\label{transf1}
\begin{split}
\mathfrak{f}_1=f_1+A_{11}(r)\partial_{\theta} \phi, \ \
\mathfrak{f}_2=f_2-A_{22}(r)\partial_{r} \phi,\ \
\mathfrak{f}_3=f_3+c_1(r)\partial_{\theta} \phi.\\
\end{split}
\end{equation}
The third equation in \eqref{elli2} implies that there exists a potential function $\psi$ such that
\begin{equation}\label{transUV2}
\partial_{\theta}\psi=\hat{r}\check{\mathcal{V}},\quad \partial_{r}\psi=\check{\mathcal{U}},
\quad \psi(0,-\theta_0)=0.
\end{equation}
Then \eqref{elli2} and \eqref{aBC2} become
\begin{equation}\label{elli3}
\begin{cases}
\begin{aligned}
&L_1(\psi,\check{\Phi})=\partial_r \mathfrak{f}_1+\partial_{\theta} \mathfrak{f}_2,\\
&L_2(\psi,\check{\Phi})=\mathfrak{f}_3,
\end{aligned}
\end{cases}
\end{equation}
and
\begin{equation}\label{aBC3}
\begin{cases}
\begin{aligned}
&(\psi,\check{\Phi})=(\mathfrak{g},\Phi_{en}-\bar \Phi(0)),\ \ &{\rm{on}}\ \ \Gamma_{en},\\
&(\partial_{r}\psi,\check{\Phi})=({h},\Phi_{ex}-\bar \Phi(R)),\ \ &{\rm{on}}\ \ \Gamma_{ex},\\
&{\partial_{\theta}\psi=\partial_{\theta}\check{\Phi}=0}, \ \ &{\rm{on}}\ \ \Gamma_\omega,
\end{aligned}
\end{cases}
\end{equation}
provided
\begin{equation*}
\begin{split}
L_1(\psi,\check{\Phi})=&\partial_{r}(a_{11}(r)\partial_{r}\psi
)
+\partial_{\theta}(a_{22}(r)\partial_{\theta}\psi)
+\partial_{r}(b_1(r)\check{\Phi}),\\
L_2(\psi,\check{\Phi})=&\partial_r(\hat{r}\partial_r\check{\Phi})
+\frac{1}{\hat{r}}\partial_{\theta}^2\check{\Phi}+c_1(r)\partial_{r}\psi
+c_2(r)\check{\Phi},
\end{split}
\end{equation*}
and
\begin{equation}\label{Acoff}
a_{11}(r)=\hat{r}\bar\rho(1-\bar M^2),\quad a_{22}(r)=\bar \rho, \ \ \mathfrak{g}(\theta)=\int_{-\theta_0}^{\theta}r_2V_{en}(s)\mathrm{d}s.
\end{equation}
\par To homogenize the boundary conditions, define
\begin{equation}\label{newva1}
\begin{split}
\varphi=\psi-rh-\mathfrak{g},\quad \Psi=\check{\Phi}-\Phi^*,
\end{split}
\end{equation}
and
\begin{equation}
\begin{split}
\Phi^*=\frac{R-r}{R}(\Phi_{en}(\theta)-\bar \Phi(0))+ \frac{r}{R}(\Phi_{ex}(\theta)-\bar \Phi(R)).
\end{split}
\end{equation}
Then $(\psi,\check{\Phi})$ solves \eqref{elli3} and \eqref{aBC3} if and only if $(\varphi,\Psi)$ solves the equations
\begin{equation}\label{elli4}
\begin{cases}
\begin{aligned}
&\mathcal{L}_1(\varphi,\Psi)=\partial_r F_1+\partial_{\theta} F_2,\\
&\mathcal{L}_2(\varphi,\Psi)=F_3,
\end{aligned}
\end{cases}
\end{equation}
with the boundary conditions
\begin{equation}\label{aBC4}
\begin{cases}
\begin{aligned}
&(\varphi,\Psi)=(0,0),\ \ &{\rm{on}}\ \ \Gamma_{en},\\
&(\partial_{r}\varphi,\Psi)=(0,0),\ \ &{\rm{on}}\ \ \Gamma_{ex},\\
&{\partial_{\theta}\varphi=\partial_{\theta}\Psi=0}, \ \ &{\rm{on}}\ \ \Gamma_\omega,
\end{aligned}
\end{cases}
\end{equation}
where
\begin{equation*}
\begin{split}
\mathcal{L}_1(\varphi,\Psi)=&\partial_{r}(a_{11}(r)\partial_{r}\varphi
)+\partial_{\theta}(a_{22}(r)\partial_{\theta}\varphi)
+\partial_{r}(b_1(r)\Psi),\\
\mathcal{L}_2(\varphi,\Psi)=&\partial_r(\hat{r}\partial_r\Psi)
+\frac{1}{\hat{r}}\partial_{\theta}^2\Psi+c_1(r)\partial_{r}\varphi
+c_2(r)\Psi,
\end{split}
\end{equation*}
and
\begin{equation}\label{transf3}
\begin{split}
F_1=&\mathfrak{f}_1-a_{11}h-a_3\Phi^*,\\
F_2=&\mathfrak{f}_2-a_{22}(rh'+\mathfrak{g}'),\\
F_3=&\mathfrak{f}_3-\partial_r(\hat{r}\partial_r\Phi^*)-\frac{1}{\hat{r}}\partial_{\theta}^2\Phi^*
-c_1h-c_2\Phi^*.
\end{split}
\end{equation}
Then it follows from \eqref{aBC1-e} that
\begin{equation}\label{aBC1-e-w}
\begin{aligned}
&\Vert F_1\Vert_{2,\alpha;\overline\Omega}
+\Vert F_2\Vert_{1,\alpha;\overline\Omega}
+\Vert F_3\Vert_{0,\alpha;\overline\Omega}\\
&\leq C\left(\Vert (f_1,f_2)\Vert_{2,\alpha;\overline\Omega}
+\Vert f_3\Vert_{0,\alpha;\overline\Omega}
+\Vert f_4\Vert_{1,\alpha;\overline\Omega}+\sigma_p\right)
 \leq C(\delta^2+\sigma_p),
 \end{aligned}
\end{equation}
where $ C>0$ is a constant depending only on the the background data and $\alpha$.
\begin{lemma}\label{lem2}
Let $(\bar\rho,\bar U,\bar P,\bar{\Phi})$ be the background  subsonic solution defined in Definition \ref{defe1}. Then the associated coefficients $(a_{11},a_{22},b_1,c_1,c_2)(r)$  satisfy the following properties.
\begin{itemize}
\item[(i)] For each $k\in\mathbb{Z}^+$, there exists a constant $C_k>0$ depending only on the background data and $k$ such that
\begin{equation*}
\Vert (a_{11},a_{22},b_1,c_1,c_2)\Vert_{k;\overline\Omega}\leq C_k.
\end{equation*}
\item[(ii)] There exists a positive constant $\lambda_0$ depending only on the background data such that
\begin{equation*}
\lambda_0\vert\bm{\xi}\vert^2\leq \sum_{i=1}^2a_{ii}(r)\xi_i^2
\leq\frac{1}{\lambda_0}\vert\bm{\xi}\vert^2 \ \ \hbox{for all} \ \ (r,\theta)\in\overline{\Omega}\ \ \hbox{and}
\ \ \bm{\xi}=(\xi_1,\xi_2)\in\mathbb{R}^2.
\end{equation*}
\item[(iii)] It holds that $b_1=c_1$ in $\overline\Omega$.
\end{itemize}
\end{lemma}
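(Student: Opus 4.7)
The plan is to prove the three items essentially by inspection, using only what Lemma \ref{Tm} already provides, since each claim reduces to a transparent property of the background solution $(\bar\rho,\bar U,\bar P,\bar\Phi)$ on the compact interval $[0,R]$. I will handle (i) and (ii) in parallel by first recording the basic pointwise bounds on the background profiles, and then reduce (iii) to a direct comparison of the two coefficient formulas in \eqref{A0coff}.

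For item (i), the starting point is that Lemma \ref{Tm} produces a unique smooth subsonic solution $(M,E)(r)$ on $[0,R]$, and the remaining quantities $\bar\rho$, $\bar U$, $\bar P$, $\bar\Phi$ are then smooth functions of $(M,E)$ through the algebraic relations $\hat r\bar\rho\bar U=J_0$, $S\equiv S_0$, $\bar P=e^{S_0}\bar\rho^\gamma$ and $\bar\Phi(r)=-\int_0^r\bar E(t)\,dt$. Because $[0,R]$ is compact and $\bar\rho$ stays uniformly positive (it is continuous and vanishes only if $\bar U$ blows up, which is ruled out by the monotone-in-$r$ control of $M^2$ and the smooth extension argument in the proof of Lemma \ref{Tm}), the denominators $\bar\rho$ and $\bar c^2=\gamma e^{S_0}\bar\rho^{\gamma-1}$ are bounded away from zero. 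Since $\hat r=r_2-r\in[r_1,r_2]$ is smooth and bounded, each coefficient in \eqref{A0coff} and \eqref{Acoff} is a smooth function of $r$ on $[0,R]$, and hence belongs to $C^k([0,R])\subset C^k(\overline\Omega)$ for every $k$. The bound $\|(a_{11},a_{22},b_1,c_1,c_2)\|_{k;\overline\Omega}\le C_k$ then follows from the compactness of $[0,R]$.

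For item (ii), $a_{22}=\bar\rho$ is uniformly bounded from above and below by the argument just given, so it remains to bound $a_{11}=\hat r\bar\rho(1-\bar M^2)$ from below. The key input is the strict subsonicity throughout $[0,R]$: the inequality $(M^2)'(r)<0$ established in \eqref{mono} (extended successively up to $r=R$ during the continuation argument) yields $\bar M^2(r)\le \bar M^2(0)=M_0^2<1$ for all $r\in[0,R]$, so $1-\bar M^2\ge 1-M_0^2>0$. Combined with $\hat r\ge r_1>0$ and the positive lower bound on $\bar\rho$, this gives a constant $\lambda_0>0$, depending only on the background data, for which $a_{11}\ge\lambda_0$ and $a_{22}\ge\lambda_0$; the upper bound $a_{ii}\le\lambda_0^{-1}$ is just continuity on the compact interval. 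The quadratic form inequality $\lambda_0|\bm\xi|^2\le \sum_{i=1}^2 a_{ii}\xi_i^2\le \lambda_0^{-1}|\bm\xi|^2$ is then immediate.

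Item (iii) is the structural miracle that later underlies the well-posedness of the linearized system, but its proof itself is tautological: inspecting \eqref{A0coff} shows
\[
b_1(r)=\frac{\hat r\bar\rho\bar U}{\bar c^{\,2}}=c_1(r),\qquad r\in[0,R],
\]
so $b_1\equiv c_1$ on $\overline\Omega$. The only thing worth emphasizing is that this equality comes from the symmetric way in which $\bar U$ and $\bar\Phi$ enter the linearization of the density representation \eqref{2rho} and of Poisson's equation; heuristically, the cross terms $\partial_r(b_1\check\Phi)$ in the first equation of \eqref{elli1} and $c_1\mathcal{U}$ in the second come from differentiating $H$ with respect to the same variable through $\mathscr{K}=\mathscr{B}-\Phi$, forcing the two coefficients to coincide. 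I do not anticipate any real obstacle in this lemma; the only point requiring care is tracking the uniform subsonicity of the background through the continuation procedure, which is already contained in the proof of Lemma \ref{Tm}.
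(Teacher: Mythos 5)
Your proof is correct and is essentially the argument the paper has in mind: the paper states Lemma~\ref{lem2} without proof, treating it as immediate from the explicit formulas in \eqref{A0coff}, \eqref{Acoff} and the existence of the smooth subsonic background solution on the compact interval $[0,R]$. Item (iii) is literal inspection of \eqref{A0coff}, item (i) follows from smoothness of $(M,E)$ (and hence of $\bar\rho,\bar U,\bar c^2$) on the compact $[0,R]$, and item (ii) uses $\hat r\ge r_1>0$ together with the uniform bounds $0<m_*\le\bar M^2\le M_0^2<1$ coming from the monotonicity $(M^2)'<0$ in Lemma~\ref{Tm}, exactly as you say.
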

\begin{remark}
{\it Lemma \ref{lem2} identifies a structural property of the elliptic system derived from the deformation-curl-Poisson decomposition, which is essential for obtaining a priori $H^1$ estimate of the associated linearized problem.}
\end{remark}
\begin{remark}
{\it It is worth mentioning that the Helmholtz decomposition for the velocity field $\mathbf{u}=\nabla \varphi+\nabla^{\perp}\psi$ with $\perp=(\partial_{x_2},-\partial_{x_1})$ in \cite{Bae2} requires a more delicate and technically involved treatment to ensure the unique solvability of the vector potential $\varphi $ and scalar potential $\psi$ in polar coordinates. In contrast, the deformation-curl-Poisson decomposition uses the Bernoulli law to reformulate the Euler-Poisson system by effectively reducing the dimension of the velocity. This leads to a significant simplification of the algebraic structure of the associated elliptic system.}
\end{remark}

\subsection{A priori estimates for the linearized system}\noindent
\par Define a Hilbert space $\mathcal{R}$ as
\begin{equation*}
\mathcal{R}=\{(\xi,\omega)\in[H^1(\Omega)]^2: \xi=\omega=0 \,\, \hbox{on} \,\, \Gamma_{en},\,\omega=0 \,\, \hbox{on} \,\, \Gamma_{ex}\}.
\end{equation*}
\begin{definition}\label{weakk}
The function $(\varphi,\Psi)$ is called a weak solution to the problem \eqref{elli4} and \eqref{aBC4}, if $(\varphi,\Psi)\in\mathcal{R}$ satisfies
\begin{equation}\label{weak}
\mathcal{L}[(\varphi,\Psi),(\xi,\omega)]
=\langle(F_1,F_2,F_3),(\xi,\omega)\rangle
\end{equation}
for all $(\xi,\omega)\in\mathcal{R}$, where
\begin{equation}\label{w1}
\begin{split}
\mathcal{L}[(\varphi,\Psi),(\xi,\omega)]
=\iint_{\Omega}&\bigg(a_{11}\varphi_r\xi_r+a_{22}\varphi_{\theta}\xi_{\theta}
+b_1\Psi\xi_r\\
&+\hat{r}\Psi_r\omega_r+\frac{1}{\hat{r}}\Psi_{\theta}\omega_{\theta}
-c_1\varphi_r\omega-c_2\Psi\omega\bigg)\mathrm{d}r\mathrm{d}\theta,
\end{split}
\end{equation}
and
\begin{equation}\label{ho}
\begin{split}
\langle(F_1,F_2,F_3),(\xi,\omega)\rangle
=\iint_{\Omega}(F_1\xi_r+F_2\xi_{\theta}-F_3\omega)\mathrm{d}r\mathrm{d}\theta
-\int_{\Gamma_{ex}}F_1\xi\mathrm{d}\theta
-\int_{\Gamma_{\omega}}F_2\xi\mathrm{d}r.
\end{split}
\end{equation}
\end{definition}
\begin{proposition}\label{prop1}
Let $(\bar\rho,\bar U,\bar P,\bar{\Phi})$ be the background  subsonic solution defined in Definition \ref{defe1}. There exists a unique weak solution $(\varphi,\Psi)\in\mathcal{R}$ to the problem \eqref{elli4} and \eqref{aBC4}. Furthermore, $(\varphi,\Psi)$ satisfies the estimate
\begin{equation}\label{H^1}
\Vert(\varphi,\Psi)\Vert_{H^1(\Omega)}
\leq C\Vert(F_1,F_2,F_3)\Vert_{0;\overline\Omega}
\end{equation}
with a constant $C$ depending only on the background data.
\end{proposition}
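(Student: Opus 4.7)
The plan is to establish Proposition~\ref{prop1} by applying the Lax--Milgram theorem on the Hilbert space $\mathcal{R}$, using the structural identity $b_1=c_1$ from Lemma~\ref{lem2}(iii) as the key to coercivity. The right-hand side functional $\langle (F_1,F_2,F_3),\cdot\rangle$ in \eqref{ho} is clearly continuous on $\mathcal{R}$: its interior integrals are dominated by $\|(F_1,F_2,F_3)\|_{C^0}\,\|(\xi,\omega)\|_{H^1}$ by Cauchy--Schwarz, while the two boundary integrals on $\Gamma_{ex}$ and $\Gamma_\omega$ are controlled by $\|F_1\|_{C^0}\|\xi\|_{L^2(\Gamma_{ex})}+\|F_2\|_{C^0}\|\xi\|_{L^2(\Gamma_\omega)}$, which the trace theorem bounds by $C\|(F_1,F_2)\|_{C^0}\|\xi\|_{H^1}$. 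Continuity of the bilinear form $\mathcal{L}$ on $\mathcal{R}\times\mathcal{R}$ follows immediately from Cauchy--Schwarz together with the uniform bounds on $(a_{11},a_{22},b_1,c_1,c_2)$ from Lemma~\ref{lem2}(i) and the fact that $\hat r=r_2-r$ is bounded above and below away from zero on $\overline\Omega$.

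The heart of the argument is coercivity. Choosing the test pair $(\xi,\omega)=(\varphi,\Psi)$ in \eqref{w1} gives
\begin{equation*}
\mathcal{L}[(\varphi,\Psi),(\varphi,\Psi)]=\iint_\Omega\Bigl(a_{11}\varphi_r^2+a_{22}\varphi_\theta^2+\hat r\Psi_r^2+\tfrac{1}{\hat r}\Psi_\theta^2-c_2\Psi^2+(b_1-c_1)\varphi_r\Psi\Bigr)\,\mathrm{d}r\mathrm{d}\theta.
\end{equation*}
By Lemma~\ref{lem2}(iii) the cross-coupling term $(b_1-c_1)\varphi_r\Psi$ vanishes identically, and this cancellation is what rescues the estimate from the indefinite bilinear form. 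The ellipticity of $(a_{11},a_{22})$ from Lemma~\ref{lem2}(ii) controls $|\nabla\varphi|^2$, while $c_2(r)=-\hat r\bar\rho/\bar c^2<0$ yields $-c_2\Psi^2\ge c_0\Psi^2$. Combining this with the positive principal part in $\Psi$ and invoking the Poincar\'e inequality on $\varphi$ (legitimate since $\varphi=0$ on $\Gamma_{en}$, a boundary piece of positive measure) produces the coercivity estimate $\mathcal{L}[(\varphi,\Psi),(\varphi,\Psi)]\ge c_*\|(\varphi,\Psi)\|_{H^1(\Omega)}^2$ for some $c_*>0$ depending only on the background data.

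With continuity and coercivity in hand, the Lax--Milgram theorem delivers a unique weak solution $(\varphi,\Psi)\in\mathcal{R}$ of \eqref{weak}. The quantitative estimate \eqref{H^1} then follows by plugging $(\xi,\omega)=(\varphi,\Psi)$ into \eqref{weak}, using the coercivity lower bound on the left and the continuity bound on the right, and absorbing one factor of $\|(\varphi,\Psi)\|_{H^1}$. The main obstacle in this scheme is the indefinite coupling between the potential $\varphi$ and the electric perturbation $\Psi$ through the $b_1$- and $c_1$-terms; without the exact identity $b_1=c_1$, one would only have a G\aa rding-type inequality, and uniqueness in $\mathcal{R}$ would require a Fredholm argument plus a separate non-resonance verification. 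The special structure singled out in Lemma~\ref{lem2}(iii) is precisely what allows the direct Lax--Milgram approach to go through and is the reason the deformation-curl-Poisson reformulation is well-suited to this convergent-nozzle geometry.
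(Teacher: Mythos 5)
Your proposal is correct and follows essentially the same route as the paper: Lax--Milgram on $\mathcal{R}$, with coercivity obtained by observing that the cross terms $b_1\Psi\xi_r$ and $-c_1\varphi_r\omega$ cancel exactly when $(\xi,\omega)=(\varphi,\Psi)$ thanks to Lemma~\ref{lem2}(iii), the remaining quadratic form being positive because of the ellipticity of $(a_{11},a_{22})$ and the sign $-c_2=\hat r\bar\rho/\bar c^2>0$, then closing with Poincar\'e on $\varphi$ (vanishing on $\Gamma_{en}$) and trace/Cauchy--Schwarz for the right-hand side functional. The concluding remarks on why the absence of $b_1=c_1$ would force a Fredholm/G\aa rding argument are sound commentary but not part of the paper's proof.
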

\begin{proof}
It follows from Lemma \ref{lem2} that $b_1=c_1$. This, together with \eqref{A0coff}, yields
\begin{equation}
\begin{split}
\mathcal{L}[(\xi,\omega),(\xi,\omega)]
&=\iint_{\Omega}\bigg(a_{11}\xi_r^2+a_{22}\xi_{\theta}^2+\hat{r}\omega_r^2+\frac{1}{\hat{r}}\omega_{\theta}^2-c_2\omega^2
\bigg)\mathrm{d}r\mathrm{d}\theta\\
&\geq\iint_{\Omega}\bigg(\lambda_0(\xi_{r}^2+\xi_{\theta}^2)+\hat{r}\omega_r^2+
\frac{1}{\hat{r}}\omega_{\theta}^2
+\frac{\hat{r}}{\gamma\mathrm{e}^{S_0}\bar\rho^{\gamma-2}}\omega^2\bigg)\mathrm{d}r\mathrm{d}\theta
\end{split}
\end{equation}
for all $(\xi,\omega)\in\mathcal{R}$ and $(r,\theta)\in\Omega$. Employing Poincar$\mathrm{\acute{e}}$ inequality deduces that
\begin{equation*}
\mathcal{L}[(\xi,\omega),(\xi,\omega)]\geq C_1\Vert(\xi,\omega)\Vert^2_{H^1(\Omega)}
\end{equation*}
with a constant $C_1$ depending on the background data. In addition, using  H$\mathrm{\ddot{o}}$lder inequality, trace inequality and Poincar$\mathrm{\acute{e}}$ inequality gives
\begin{equation*}
\vert\langle(F_1,F_2,F_3),(\xi,\omega)\rangle\vert\leq C_2\Vert(\xi,\omega)\Vert_{H^1(\Omega)}
\Vert(F_1,F_2,F_3)\Vert_{0;\overline\Omega},
\end{equation*}
where the constant $C_2>0$ is independent on the background data. Lax-Milgram theorem implies that, for any $(F_1,F_2,F_3)
\in\left(C^0(\overline\Omega)\right)^3$, there exists a unique $(\varphi,\Psi)\in\mathcal{R}$ such that \eqref{weak} holds. Therefore, the problem \eqref{elli4} and \eqref{aBC4} has a unique weak solution $(\varphi,\Psi)\in\mathcal{R}$ satisfying the estimate \eqref{H^1}. This completes the proof of Proposition \ref{prop1}.
\end{proof}
\begin{proposition}\label{prop2}
Suppose that $(\varphi,\Psi)\in\mathcal{R}$ is the weak solution to \eqref{elli4} and \eqref{aBC4} given by Proposition \ref{prop1}. There exists $\bar{\alpha}\in(0,1)$ depending on the background data such that, for any $\alpha\in(0,\bar{\alpha})$, $(\varphi,\Psi)$ satisfies
\begin{equation}\label{C^c}
\Vert(\varphi,\Psi)\Vert_{0,\alpha;\overline\Omega}
\leq \check{C}\Vert(F_1,F_2,F_3)\Vert_{0;\overline\Omega},
\end{equation}
where $\check{C}>0$ is a constant depending on the background data and $\alpha$.
\end{proposition}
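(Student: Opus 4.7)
The plan is to bootstrap the $H^1$ bound of Proposition \ref{prop1} to a Hölder estimate by treating \eqref{elli4} as two scalar divergence-form elliptic equations whose coupling terms can be absorbed into $L^q$-integrable data, and then applying the De Giorgi--Nash--Moser Hölder estimate. Since $\Omega\subset\mathbb{R}^2$, the embedding $H^1(\Omega)\hookrightarrow L^q(\Omega)$ holds for every finite $q$; combined with \eqref{H^1} this yields
\begin{equation*}
\|\varphi\|_{L^q(\Omega)}+\|\Psi\|_{L^q(\Omega)}\le C_q\|(F_1,F_2,F_3)\|_{C^0(\overline\Omega)}
\end{equation*}
for any $q<\infty$, which is the preliminary integrability we need.

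I would absorb $b_1\Psi$ in the first equation of \eqref{elli4} into its divergence-form flux, giving
\begin{equation*}
\partial_r(a_{11}\partial_r\varphi)+\partial_\theta(a_{22}\partial_\theta\varphi)=\partial_r(F_1-b_1\Psi)+\partial_\theta F_2.
\end{equation*}
For the second equation, the coupling term $c_1\partial_r\varphi$ is only $L^2$, which would obstruct a direct Hölder estimate; I sidestep this using the identity $c_1\partial_r\varphi=\partial_r(c_1\varphi)-c_1'\varphi$, yielding
\begin{equation*}
\partial_r(\hat r\,\partial_r\Psi)+\partial_\theta(\hat r^{-1}\partial_\theta\Psi)=\partial_r(-c_1\varphi)+\bigl(F_3+c_1'\varphi-c_2\Psi\bigr).
\end{equation*}
Both operators are uniformly elliptic (by Lemma \ref{lem2}(ii) and $r_1\le\hat r\le r_2$), and each right-hand side is now of the form $\partial_i f^i+g$ with $f^i,g\in L^q$ for any $q<\infty$, by the preceding integrability and the smoothness of the coefficients from Lemma \ref{lem2}(i).

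Fixing any $q>2$, the interior De Giorgi--Nash--Moser Hölder estimate (e.g.\ Gilbarg--Trudinger, Theorem 8.24) then produces interior $C^{0,\bar\alpha}$ bounds for $\varphi$ and $\Psi$, with exponent $\bar\alpha=\bar\alpha(\lambda_0,q)\in(0,1)$ depending only on the background data. To reach the boundary I plan to use symmetric extensions: even reflection in $\theta$ across $\Gamma_\omega$ (Neumann for both unknowns), odd reflection in $r$ across $\Gamma_{en}$ (Dirichlet for both), and across $\Gamma_{ex}$ even reflection in $r$ for $\varphi$ (Neumann) and odd reflection in $r$ for $\Psi$ (Dirichlet). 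Since the coefficients depend only on $r$, these reflections preserve uniform ellipticity and the divergence-form structure up to sign changes in the data, so interior estimates on the extended domain furnish boundary estimates for the original one.

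The main obstacle is the bookkeeping at the two corners where a Neumann edge meets a Dirichlet edge --- most notably at $\Gamma_{ex}\cap\Gamma_\omega$, where the required parities of extension differ between $\varphi$ (even in both variables) and $\Psi$ (even in $\theta$, odd in $r$). One must verify that after successive reflections the coupling fluxes $-b_1\Psi$ and $-c_1\varphi$ remain consistent across the extended edges so that the interior estimate is applicable in a neighborhood of the former corner. Once this consistency is established, a finite covering argument together with the interior and boundary Hölder bounds and \eqref{H^1} yields the global estimate \eqref{C^c}.
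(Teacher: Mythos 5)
Your route is genuinely different from the paper's. The paper proves Proposition \ref{prop2} via a Campanato/Morrey iteration: it compares $(\varphi,\Psi)$ on small half- and quarter-balls $D_{\mathrm r}(\mathrm x_0)=B_{\mathrm r}(\mathrm x_0)\cap\Omega$ to constant-coefficient, homogeneous comparison solutions $(\omega_1,\omega_2)$ with the same boundary conditions, derives an energy growth bound (3.36) for the difference together with a dyadic decay estimate (3.37) for the comparison pair, and then closes with the iteration lemma (Lemma 3.4) and the Morrey-to-H\"older theorem (Theorem 3.1, Corollary 3.2) from Han--Lin. That framework handles the mixed Dirichlet/Neumann corners \emph{in situ}, since the comparison problems are posed on the quarter-ball with the actual boundary conditions and the corner is treated explicitly as case~(ii). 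You instead decouple the system into two scalar divergence-form equations, absorb the coupling ($b_1\Psi$ and $c_1\partial_r\varphi=\partial_r(c_1\varphi)-c_1'\varphi$) into $L^q$ data using the 2D embedding $H^1\hookrightarrow L^q$, and invoke De Giorgi--Nash--Moser (GT~Thm.~8.24), extending to the boundary by reflection. This is more ``black-box'' and arguably more robust --- it does not rely on the harmonic-replacement decay estimate and naturally tolerates merely $L^q$ data --- but it shifts the corner difficulty into the reflection bookkeeping. Your concern there is legitimate but resolvable: because the weak formulation \eqref{w1}--\eqref{ho} imposes the \emph{conormal} conditions $a_{11}\varphi_r+b_1\Psi=F_1$ on $\Gamma_{ex}$ and $a_{22}\varphi_\theta=F_2$ on $\Gamma_\omega$ (with $F_2=0$ there by \eqref{compatibilityf}) rather than $\partial_r\varphi=0$ literally, the one-sided fluxes at each reflection edge vanish identically, so the extended $\varphi^e$ and $\Psi^e$ satisfy the reflected equations weakly with no distributional defect on the mirror lines, even at the corner where the parities of $\varphi$ and $\Psi$ differ. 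You should also note explicitly that after odd reflection the data $F_1^e,F_3^e$ are only $L^\infty$ (not continuous), which is harmless for GT 8.24 since only $L^q$, $q>2$, is required. With those two points spelled out, your argument is complete and yields the same conclusion with $\bar\alpha$ determined by the ellipticity ratio $\lambda_0$ and the choice of $q$.
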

\begin{proof}
For the domain $\Omega$, there exists a constant $R_0>0$ such that, for any $\mathrm{x}^*:=(r^*,\theta^*)\in\Omega$ and $\mathrm{r}\in(0,R_0]$, it holds that
\begin{equation*}
\frac{1}{k_0}\leq\frac{\mathrm{vol}(B_{\mathrm{r}}(\mathrm{x}^*)\cap\Omega)}
{\mathrm{vol}(B_{\mathrm{r}}(\mathrm{x}^*))}\leq k_0
\end{equation*}
with a uniform constant $k_0>0$. It follows from Theorem 3.1 and Corollary 3.2 in \cite{Han} that, if there are constants $C$ and $R_1\leq R_0$ satisfying
\begin{equation}\label{TC}
\iint_{B_{\mathrm{r}}(\mathrm{x}^*)\cap\Omega}\left(\vert\nabla\varphi\vert^2
+\nabla\Psi\vert^2\right)\mathrm{d}r\mathrm{d}\theta
\leq C\mathrm{r}^{2\alpha}\Vert(F_1,F_2,F_3)\Vert_{0;\overline\Omega}^2
\end{equation}
for all $\mathrm{x}^*\in\Omega$ and $\mathrm{r}\in(0,R_1]$, then
\begin{equation*}
\Vert\varphi\Vert_{0,\alpha;\overline\Omega}+\Vert\Psi\Vert_{0,\alpha,\overline\Omega}
\leq C\left(\Vert(F_1,F_2,F_3)\Vert_{0;\overline\Omega}
+\Vert\varphi\Vert_{L^2(\Omega)}+\Vert\Psi\Vert_{L^2(\Omega)}\right).
\end{equation*}
This, together with Proposition \ref{prop1}, gives
\begin{equation*}
\Vert\varphi\Vert_{0,\alpha;\overline\Omega}+\Vert\Psi\Vert_{0,\alpha;\overline\Omega}
\leq C\Vert(F_1,F_2,F_3)\Vert_{0;\overline\Omega},
\end{equation*}
where the constant $C$ depends on the background data and $\alpha$. Hence, \eqref{C^c} is obtained. In what follows, we prove the inequality \eqref{TC}. There exist three cases of $\mathrm{x}^*=(r^*,\theta^*)$:
\begin{itemize}
\item[(i)] $\mathrm{x}^*\in\Omega\setminus\Gamma_b$ and $B_{\mathrm{r}}(\mathrm{x}^*)\subset\Omega$.
\item[(ii)] $\mathrm{x}^*\in\Gamma_p$, where $\Gamma_p=\overline{\Gamma}_{en}\cup\overline{\Gamma}_{ex}\cap\overline{\Gamma}_{\omega}$.
\item[(iii)] $\mathrm{x}^*\in(\Gamma_b\setminus\Gamma_p)$, $B_{\mathrm{r}}(\mathrm{x}^*)\cap\Gamma_p=\emptyset$, where $\Gamma_b=\overline{\Gamma}_{en}\cup\overline{\Gamma}_{ex}\cup\overline{\Gamma}_{\omega}$.
\end{itemize}
Case $(i)$, case $(iii)$ and other general cases can be treated via case $(ii)$. Here we treat only case $(ii)$. Without loss of generality, we fix $\mathrm{x}_0=(r_0,\theta_0)\in\Gamma_{p}\cap\Gamma_{ex}$ provided $r_0=R$. For a constant $\mathrm{r}\in(0,\frac{1}{10}\min\{1,R\})$, set $D_{\mathrm{r}}(\mathrm{x}_0):=B_{\mathrm{r}}(\mathrm{x}_0)\cap\Omega$. Let $\omega_1$ and $\omega_2$ be weak solutions to
\begin{equation}
\begin{cases}
\partial_{r}(a_{11}(\mathrm{x}_0)\partial_r\omega_{1})
+\partial_{\theta}(a_{22}(\mathrm{x}_0)\partial_{\theta}\omega_{1})=0,
&\hbox{in}\quad D_{\mathrm{r}}(\mathrm{x}_0),\\
\omega_{1}=0, \,\,\,\,&\hbox{on}\quad \partial D_{\mathrm{r}}(\mathrm{x}_0)\cap\Gamma_{ex},\\
\partial_{\theta}\omega_{1}=0, \,\,\,\,&\hbox{on}\quad \partial D_{\mathrm{r}}(\mathrm{x}_0)\cap\Gamma_{\omega},\\
\omega_1=\varphi, &\hbox{on}\quad \partial D_{\mathrm{r}}(\mathrm{x}_0)\cap\Omega,
\end{cases}
\end{equation}
and
\begin{equation}
\begin{cases}
\partial_r(\hat{r}\partial_{r}\omega_{2})+\frac{1}{\hat{r}}\partial_{\theta}^2\omega_{2}
=0, &\hbox{in}\quad D_{\mathrm{r}}(\mathrm{x}_0),\\
\omega_2=0, &\hbox{on}\quad \partial D_{\mathrm{r}}(\mathrm{x}_0)\cap\Gamma_{ex},\\
\partial_{\theta}\omega_{2}=0, &\hbox{on}\quad \partial D_{\mathrm{r}}(\mathrm{x}_0)\cap\Gamma_{\omega},\\
\omega_2=\mathcal{W}, &\hbox{on}\quad \partial D_{\mathrm{r}}(\mathrm{x}_0)\cap\Omega.
\end{cases}
\end{equation}
For any $z=(z_1,z_2)\in H^1(D_{\mathrm{r}}(\mathrm{x}_0))\cap\{z=0 \,\, \hbox{on} \,\, \partial D_{\mathrm{r}}(\mathrm{x}_0)\cap(\Omega\cup\Gamma_{ex})\}$, one has
\begin{equation}\label{weak1}
\iint_{D_{\mathrm{r}}(\mathrm{x}_0)}\bigg(a_{11}(\mathrm{x}_0)\partial_r\omega_1\partial_rz_1
+a_{22}(\mathrm{x}_0)\partial_{\theta}\omega_{1}\partial_{\theta}z_{1}
+\hat{r}\partial_{r}\omega_{2}\partial_{r}z_{2}
+\frac{1}{\hat{r}}\partial_{\theta}\omega_{2}\partial_{\theta}z_{2}
\bigg)\mathrm{d}r\mathrm{d}\theta=0.
\end{equation}
We extend $z_1$ and $z_2$ by setting $z_1=z_2=0$ in $\Omega\setminus D_{\mathrm{r}}(\mathrm{x}_0)$. Substituting $(\xi,\omega)=(z_1,z_2)$ into \eqref{weak} and then subtracting \eqref{weak1} from it yield
\begin{equation}\label{weak2}
\begin{split}
\iint_{D_{\mathrm{r}}(\mathrm{x}_0)}\mathcal{T}\mathrm{d}r\mathrm{d}\theta
=\langle(F_1,F_2,F_3),(z_1,z_2)\rangle
-\iint_{D_{\mathrm{r}}(\mathrm{x}_0)}(\mathcal{J}_1+\mathcal{J}_2+\mathcal{J}_3)
\mathrm{d}r\mathrm{d}\theta,
\end{split}
\end{equation}
where
\begin{equation*}
\begin{split}
&\mathcal{T}=a_{11}(\mathrm{x}_0)\partial_r(\varphi-\omega_1)\partial_rz_1
+a_{22}(\mathrm{x}_0)\partial_{\theta}(\varphi-\omega_1)\partial_{\theta}z_1\\
&\qquad+\hat{r}\partial_{r}(\Psi-\omega_2)\partial_{r}z_2
+\frac{1}{\hat{r}}\partial_{\theta}(\Psi-\omega_2)\partial_{\theta}z_2,\\
&\langle(F_1,F_2,F_3),(z_1,z_2)\rangle
=\iint_{\Omega}(F_1\partial_rz_1+F_2\partial_{\theta}z_1-F_3z_2)\mathrm{d}r\mathrm{d}\theta
-\int_{\Gamma_{ex}}F_1z_1\mathrm{d}\theta
-\int_{\Gamma_{\omega}}F_2z_1\mathrm{d}r,\\
&\mathcal{J}_1=
(a_{11}-a_{11}(\mathrm{x}_0))\partial_r\varphi\partial_rz_1
+(a_{22}-a_{22}(\mathrm{x}_0))\partial_{\theta}\varphi\partial_{\theta}z_1,\ \
\mathcal{J}_2=
b_1\Psi\partial_{r}z_1,\ \ \mathcal{J}_3=
-c_1\partial_{r}\varphi z_2
-c_2\Psi z_2.
\end{split}
\end{equation*}
\par Set $(Y_1,Y_2)=(\varphi-\omega_1,\Psi-\omega_2)$ and substitute $(z_1,z_2)=(Y_1,Y_2)$ into \eqref{weak2}. Lemma \ref{lem2} implies
\begin{equation}\label{x1}
\begin{split}
&\iint_{D_{\mathrm{r}}(\mathrm{x}_0)}
\left(a_{11}(\mathrm{x}_0)\vert\partial_rY_1\vert^2
+a_{22}(\mathrm{x}_0)\vert\partial_{\theta}Y_1\vert^2
+\hat{r}\vert\partial_{r}Y_2\vert^2
+\frac{1}{\hat{r}}\vert\partial_{\theta}Y_2\vert^2\right)
\mathrm{d}r\mathrm{d}\theta\\
&\geq\min\{\lambda_0,r_1,\frac{1}{r_2}\}
\iint_{D_{\mathrm{r}}(\mathrm{x}_0)}
\left(\vert\nabla Y_1\vert^2+\vert\nabla Y_2\vert^2\right)
\mathrm{d}r\mathrm{d}\theta.
\end{split}
\end{equation}
Using Lemma \ref{lem2} and H$\mathrm{\ddot{o}}$lder inequality yield
\begin{equation}\label{x2}
\begin{split}
\iint_{D_{\mathrm{r}}(\mathrm{x}_0)}\mathcal{J}_1\mathrm{d}r\mathrm{d}\theta&
\leq C_1
\mathrm{r}\bigg(\iint_{D_{\mathrm{r}}(\mathrm{x}_0)}
\vert\nabla \varphi\vert^2\mathrm{d}r\mathrm{d}\theta\bigg)^{\frac{1}{2}}
\bigg(\iint_{D_{\mathrm{r}}(\mathrm{x}_0)}
\vert\nabla Y_1\vert^2\mathrm{d}r\mathrm{d}\theta\bigg)^{\frac{1}{2}}\\
&\leq C_1\mathrm{r}
\Vert\varphi\Vert_{H^1(\Omega)}
\bigg(\iint_{D_{\mathrm{r}}(\mathrm{x}_0)}
\vert\nabla Y_1\vert^2\mathrm{d}r\mathrm{d}\theta\bigg)^{\frac{1}{2}},
\end{split}
\end{equation}
where $C_1=\Vert a_{11}\Vert_{C^1(\overline{\Omega})}+\Vert a_{22}\Vert_{C^1(\overline{\Omega})}$. Since $\Psi\in H^1(D_{\mathrm{r}}(\mathrm{x}_0))$ with $\Psi=0$ on $\partial D_{\mathrm{r}}(\mathrm{x}_0)\cap\Gamma_{en}$, that is, $\Psi(0,\theta)=0$. Then it holds that
\begin{equation}
\begin{split}
\iint_{D_{\mathrm{r}}(\mathrm{x}_0)}\Psi^2\mathrm{d}r\mathrm{d}\theta&
\leq \frac{1}{2}\mathrm{r}^{2}
\iint_{D_{\mathrm{r}}(\mathrm{x}_0)}\Psi_r^2\mathrm{d}r\mathrm{d}\theta.
\end{split}
\end{equation}
This, together with H$\mathrm{\ddot{o}}$lder inequality and Poincar$\mathrm{\acute{e}}$ inequality, gives
\begin{equation}\label{x3}
\begin{split}
\iint_{D_{\mathrm{r}}(\mathrm{x}_0)}\mathcal{J}_2\mathrm{d}r\mathrm{d}\theta&
\leq C_2
\bigg(\mathrm{r}^2\iint_{D_{\mathrm{r}}(\mathrm{x}_0)}
\vert\nabla\Psi^2\vert\mathrm{d}r\mathrm{d}\theta\bigg)^{\frac{1}{2}}
\bigg(\iint_{D_{\mathrm{r}}(\mathrm{x}_0)}
\vert\nabla Y_1\vert^2\mathrm{d}r\mathrm{d}\theta\bigg)^{\frac{1}{2}}\\
&\leq C_2\mathrm{r}
\Vert\Psi\Vert_{H^1(\Omega)}
\bigg(\iint_{D_{\mathrm{r}}(\mathrm{x}_0)}
\vert\nabla Y_1\vert^2\mathrm{d}r\mathrm{d}\theta\bigg)^{\frac{1}{2}},
\end{split}
\end{equation}
where $C_2=\Vert b_1\Vert_{C^0(\overline{\Omega})}$. Similarly, for $\mathcal{J}_3$, we have
\begin{equation}\label{x4}
\begin{split}
\iint_{D_{\mathrm{r}}(\mathrm{x}_0)}\mathcal{J}_3\mathrm{d}r\mathrm{d}\theta&
\leq C_3
\bigg(\iint_{D_{\mathrm{r}}(\mathrm{x}_0)}
\vert\nabla\varphi^2\vert\mathrm{d}r\mathrm{d}\theta\bigg)^{\frac{1}{2}}
\bigg(\mathrm{r}^2\iint_{D_{\mathrm{r}}(\mathrm{x}_0)}
\vert\nabla Y_2\vert^2\mathrm{d}r\mathrm{d}\theta\bigg)^{\frac{1}{2}}\\
&+C_3\bigg(\mathrm{r}^2\iint_{D_{\mathrm{r}}(\mathrm{x}_0)}
\vert\nabla\Psi^2\vert\mathrm{d}r\mathrm{d}\theta\bigg)^{\frac{1}{2}}
\bigg(\iint_{D_{\mathrm{r}}(\mathrm{x}_0)}
\vert\nabla Y_2\vert^2\mathrm{d}r\mathrm{d}\theta\bigg)^{\frac{1}{2}}\\
&\leq C_3\mathrm{r}
(\Vert\varphi\Vert_{H^1(\Omega)}+\Vert\Psi\Vert_{H^1(\Omega)})
\bigg(\iint_{D_{\mathrm{r}}(\mathrm{x}_0)}
\vert\nabla Y_2\vert^2\mathrm{d}r\mathrm{d}\theta\bigg)^{\frac{1}{2}},
\end{split}
\end{equation}
provided $C_3=\Vert c_1\Vert_{C^0(\overline{\Omega})}+\Vert c_2\Vert_{C^0(\overline{\Omega})}$.
On the other hand, one can obtain that there exists a constant $C_4>0$ such that
\begin{equation}\label{x5}
\langle(F_1,F_2,F_3),(z_1,z_2)\rangle
\leq C_4\Vert(F_1,F_2,F_3)\Vert_{0;\overline\Omega}\mathrm{r}\bigg(\iint_{D_{\mathrm{r}}(\mathrm{x}_0)}(\vert\nabla Y_1\vert^2+
\vert\nabla Y_2\vert^2)\mathrm{d}r\mathrm{d}\theta\bigg)^{\frac{1}{2}}.
\end{equation}
\par It follows from \eqref{x1}, \eqref{x2}, \eqref{x3}, \eqref{x4}, \eqref{x5} and Proposition \ref{prop1} that
\begin{equation}\label{x6}
\begin{split}
\iint_{D_{\mathrm{r}}(\mathrm{x}_0)}
\left(\vert\nabla Y_1\vert^2+\vert\nabla Y_2\vert^2\right)
\mathrm{d}r\mathrm{d}\theta
\leq C_5 \mathrm{r}^2 \Vert(F_1,F_2,F_3)\Vert_{0;\overline\Omega}^2,
\end{split}
\end{equation}
where $C_5>0$ is a constant.
We adjust the proof of Lemma 4.12 in \cite{Han} to obtain that there exist constants $\hat{\alpha}\in(0,1)$ and $\hat{C}>0$ such that $\omega_1$ and $\omega_{2}$ in \eqref{weak1} satisfy
\begin{equation}\label{x7}
\iint_{D_{\mathrm{s}}(\mathrm{x}_0)}(\vert\nabla\omega_1\vert^2+\vert\nabla\omega_2\vert^2)\mathrm{d}r\mathrm{d}\theta
\leq \hat{C}\left(\frac{\mathrm{s}}{\mathrm{r}}\right)^{2\hat{\alpha}}
\iint_{D_{\mathrm{r}}(\mathrm{x}_0)}(\vert\nabla\omega_1\vert^2+\vert\nabla\omega_2\vert^2)\mathrm{d}r\mathrm{d}\theta
\end{equation}
for any $\mathrm{s}\in(0,\mathrm{r}]$. In view of $(Y_1,Y_2)=(\varphi-\omega_1,\Psi-\omega_2)$, then
\begin{equation}
\begin{split}
\iint_{D_{\mathrm{s}}(\mathrm{x}_0)}(\vert\nabla\varphi\vert^2
+\vert\nabla\Psi\vert^2)\mathrm{d}r\mathrm{d}\theta
\leq& \hat{C}\left(\frac{\mathrm{s}}{\mathrm{r}}\right)^{2\hat{\alpha}}
\iint_{D_{\mathrm{r}}(\mathrm{x}_0)}(\vert\nabla\varphi\vert^2
+\vert\nabla\Psi\vert^2)\mathrm{d}r\mathrm{d}\theta\\
&+ 2C_5 \mathrm{r}^2 \Vert(F_1,F_2,F_3)\Vert_{0;\overline\Omega}^2,
\end{split}
\end{equation}
provided $C_5$ and $\hat{C}$ are same as in \eqref{x6} and \eqref{x7}, respectively. Since $\mathrm{r}\in(0,\frac{1}{10}\min\{1,R\}]$, we can choose a constant $\bar{\alpha}\in(0,\hat{\alpha})$ such that $\mathrm{r}^2<\mathrm{r}^{2\bar{\alpha}}<\mathrm{r}^{2\hat{\alpha}}$. According to Lemma 3.4 in \cite{Han}, for $R_0\in(0,\frac{1}{10}\min\{1,R\}]$, one has
\begin{equation}\label{xx}
\begin{split}
\iint_{D_{\mathrm{r}}(\mathrm{x}_0)}(\vert\nabla\varphi\vert^2
+\vert\nabla\Psi\vert^2)\mathrm{d}r\mathrm{d}\theta
\leq C\mathrm{r}^{2\bar{\alpha}}\Vert(F_1,F_2,F_3)\Vert_{0;\overline\Omega}^2
\end{split}
\end{equation}
for any $\mathrm{x}_0=(r_0,\theta_0)\in\Gamma_{p}\cap\Gamma_{ex}$ provided $r_0=R$ and $0<\mathrm{r}\leq R_0$. Repeating the argument above implies that \eqref{xx} holds for any given $\mathrm{x}_0\in\overline{\Omega}$.
\end{proof}

The $C^{1,\alpha}$-estimates in Proposition \ref{prop3} can be proved by adjusting arguments of Lemma 3.6 in \cite{Bae3}. Here we give a brief sketch for the proof.
\begin{proposition}\label{prop3}
Let $(\varphi,\Psi)\in\mathcal{R}$ be as in Proposition \ref{prop1}. For any $\alpha\in(0,1)$, there exists a constant $C_a$ depending on the background data and $\alpha$ such that $(\varphi,\Psi)$ satisfies
\begin{equation}\label{esma1}
\Vert(\varphi,\Psi)\Vert_{1,\alpha;\Omega}
\leq C_a\Vert(F_1,F_2,F_3)\Vert_{0,\alpha;\overline{\Omega}}.
\end{equation}
\end{proposition}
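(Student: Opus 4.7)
The plan is to bootstrap from the $C^{0,\alpha}$ bound of Proposition \ref{prop2} to the desired $C^{1,\alpha}$ bound by treating the coupling terms in \eqref{elli4} as lower-order sources and invoking standard Schauder theory. Concretely, I would rewrite the first equation of \eqref{elli4} as a pure divergence-form uniformly elliptic equation for $\varphi$,
\begin{equation*}
\partial_r(a_{11}\partial_r\varphi) + \partial_\theta(a_{22}\partial_\theta\varphi) = \partial_r(F_1 - b_1\Psi) + \partial_\theta F_2,
\end{equation*}
with mixed boundary conditions $\varphi = 0$ on $\Gamma_{en}$, $\partial_r\varphi = 0$ on $\Gamma_{ex}$, $\partial_\theta\varphi = 0$ on $\Gamma_\omega$. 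Since Proposition \ref{prop2} already controls $\|\Psi\|_{0,\alpha;\overline\Omega}$ in terms of $\|(F_1,F_2,F_3)\|_{C^0(\overline\Omega)}$ and $b_1$ depends only on $r$ by Lemma \ref{lem2}(i), the divergence source $F_1-b_1\Psi$ lies in $C^{0,\alpha}(\overline\Omega)$ with the same control. Analogously, I rewrite the second equation as
\begin{equation*}
\partial_r(\hat{r}\partial_r\Psi) + \tfrac{1}{\hat{r}}\partial_\theta^2\Psi = F_3 - c_1\partial_r\varphi - c_2\Psi,
\end{equation*}
with $\Psi = 0$ on $\Gamma_{en}\cup\Gamma_{ex}$ and $\partial_\theta\Psi = 0$ on $\Gamma_\omega$, whose right-hand side will lie in $C^{0,\alpha}$ once $\varphi\in C^{1,\alpha}$ is established.

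Next, I would establish the $C^{1,\alpha}$ bound locally. Interior Schauder estimates apply on any ball compactly contained in $\Omega$, and standard boundary Schauder estimates handle neighborhoods of each flat portion of $\partial\Omega$ away from its endpoints, using Lemma \ref{lem2}(i)(ii) for the smoothness and uniform ellipticity of the coefficients. At each of the four right-angle corners of $\Omega$, the coefficients $a_{11}, a_{22}, \hat{r}, b_1, c_1, c_2$ depend only on $r$, so an even reflection across $\Gamma_\omega = \{\theta = \pm\theta_0\}$, which is consistent with the Neumann conditions $\partial_\theta\varphi = \partial_\theta\Psi = 0$, preserves the equations; the compatibility conditions \eqref{compatibilityf} then ensure that $F_1$ extends evenly and $F_2$ extends oddly across $\Gamma_\omega$ while remaining in $C^{0,\alpha}$, so that the reflected problem produces no artificial jumps. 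After such a reflection every corner becomes a one-sided-boundary situation, to which standard boundary Schauder estimates apply. Summing the local bounds, absorbing $\|\varphi\|_{0,\alpha}$ and $\|\Psi\|_{0,\alpha}$ via Proposition \ref{prop2}, and applying the bootstrapped control $\|\partial_r\varphi\|_{0,\alpha}\leq\|\varphi\|_{1,\alpha}$ in the second equation yields \eqref{esma1}.

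The main obstacle I anticipate is the analysis at the corners where the boundary conditions for $\varphi$ and $\Psi$ differ in type; for example, at $(R,\pm\theta_0)$ the function $\Psi$ satisfies a mixed Dirichlet--Neumann combination which generically could produce $\mathrm{dist}^{1/2}$-type singularities destroying $C^{1,\alpha}$ regularity. It is precisely the role of the compatibility conditions \eqref{compati} and \eqref{compatibilityf} to annihilate these singular modes, so that the reflection across $\Gamma_\omega$ produces sufficiently smooth extensions and the Schauder estimates remain valid up to every corner for all $\alpha\in(0,1)$. The remaining technical work is bookkeeping: verifying that the various local constants depend only on the background data and $\alpha$, and assembling a finite cover of $\overline\Omega$ on which the local estimates combine into the global bound \eqref{esma1}.
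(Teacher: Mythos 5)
Your proposal follows essentially the same route as the paper: decouple the system by viewing the first equation of \eqref{elli4} as a divergence-form equation for $\varphi$ with source $F_1 - b_1\Psi$ (controlled in $C^{0,\alpha}$ via Proposition \ref{prop2}), estimate $\varphi$ in $C^{1,\alpha}$ using reflection across $\Gamma_\omega$ together with the $r$-only dependence of the coefficients from Lemma \ref{lem2}, and then feed $c_1\partial_r\varphi$ back into the second equation to obtain the $C^{1,\alpha}$ bound for $\Psi$, again via reflection and the compatibility conditions. This matches the paper's two-step argument (which invokes Lemma 3.6 of \cite{Bae3} for the Schauder/reflection details), so the proposal is correct and equivalent in strategy.
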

\begin{proof}
\textbf{Step 1: $C^{1,\alpha}$ estimate for $\varphi$.} By choosing $\omega=0$ in \eqref{w1}, $\varphi$ is regarded as a weak solution to
\begin{equation}\label{www}
\begin{split}
\iint_{\Omega}\bigg(a_{11}\varphi_r\xi_r+a_{22}\varphi_{\theta}\xi_{\theta}
\bigg)\mathrm{d}r\mathrm{d}\theta
=\iint_{\Omega}((F_1-b_1\Psi)\xi_r+F_2\xi_{\theta})\mathrm{d}r\mathrm{d}\theta
-\int_{\Gamma_{ex}}F_1\xi\mathrm{d}\theta
-\int_{\Gamma_{\omega}}F_2\xi\mathrm{d}r.
\end{split}
\end{equation}
Using \eqref{H^1}, \eqref{C^c} and the method of reflection with respect to $\Gamma_{\omega}$ gives
\begin{equation}\label{C^11}
\Vert\varphi\Vert_{1,\alpha;\overline\Omega}
\leq C \Vert(F_1,F_2,F_3)\Vert_{0,\alpha;\overline{\Omega}},
\end{equation}
provided $C>0$ is a constant depending on the background data and $\alpha$.

\textbf{Step 2: $C^{1,\alpha}$ estimate for $\Psi$.}
Substituting $\xi=0$ into \eqref{w1}, then $\Psi$ is regarded as a weak solution to
\begin{equation}
\begin{split}
\iint_{\Omega}\bigg(\hat{r}\Psi_r\omega_r
+\frac{1}{\hat{r}}\Psi_{\theta}\omega_{\theta}
-c_2\Psi\omega\bigg)\mathrm{d}r\mathrm{d}\theta
=\iint_{\Omega}(c_1\varphi_r-F_3)\omega\mathrm{d}r\mathrm{d}\theta.
\end{split}
\end{equation}
Using Lemma \ref{lem2}, \eqref{C^c} and \eqref{C^11} yields
\begin{equation*}
\Vert (c_1\varphi_r-F_3)\Vert_{0,\alpha;\overline{\Omega}}
\leq C\Vert(F_1,F_2,F_3)\Vert_{0,\alpha;\overline{\Omega}}.
\end{equation*}
With the help of the compatibility conditions \eqref{compati} and the method of reflection with respect to $\Gamma_{\omega}$, one obtains
\begin{equation}\label{C^22}
\Vert\Psi\Vert_{1,\alpha;\overline\Omega}
\leq C \Vert(F_1,F_2,F_3)\Vert_{0,\alpha;\overline{\Omega}},
\end{equation}
where the constant $C>0$ depends on the background data and $\alpha$. This finishes the proof of Proposition \ref{prop3}.
\end{proof}

\begin{proposition}\label{prop4}
Fix $\alpha\in(0,1)$. The linear boundary value problem \eqref{elli1} and \eqref{aBC1} has a unique solution $(\mathcal{U}, \mathcal{V}, \check{\Phi})\in[C^{2,\alpha}(\overline{\Omega})]^3$ satisfying the estimate
\begin{equation}\label{C2c}
\begin{split}
\Vert(\mathcal{U}, \mathcal{V}, \check{\Phi})\Vert_{2,\alpha;\overline\Omega}
\leq & C_b\Big(\Vert(f_1,f_2)\Vert_{2,\alpha;\overline{\Omega}}
+\Vert f_3\Vert_{0,\alpha;\overline{\Omega}}
+\Vert f_4\Vert_{1,\alpha;\overline{\Omega}}
+\sigma_p\Big)\leq C_b(\delta^2+\sigma_p),
\end{split}
\end{equation}
where  $C_b>0$ is a constant depending on the background data and $\alpha$. Furthermore, the solution $(\mathcal{U}, \mathcal{V}, \check{\Phi})$ satisfies the compatibility conditions
\begin{equation}\label{compaC2}
\partial_{\theta}\mathcal{U}(r,\pm \theta_0)=\mathcal{V}(r,\pm \theta_0)=\partial^2_{\theta}\mathcal{V}(r,\pm \theta_0)=\partial_{\theta}\check{\Phi}(r,\pm \theta_0)=0, \ \ \hbox{for} \ \ r\in[0,R].
\end{equation}
\end{proposition}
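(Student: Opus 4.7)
The plan is to upgrade the $C^{1,\alpha}$ regularity of the weak solution $(\varphi,\Psi)$ from Proposition \ref{prop3} to full $C^{2,\alpha}$ regularity by a two-step Schauder bootstrap on \eqref{elli4}--\eqref{aBC4}, and then to pass back through the chain of changes of variables \eqref{4-26}, \eqref{transUV1}, \eqref{transUV2}, \eqref{newva1} to recover the target estimate \eqref{C2c} for $(\mathcal{U},\mathcal{V},\check{\Phi})$ of the original linear problem \eqref{elli1}--\eqref{aBC1}. The key structural facts driving the argument are: the coefficients $a_{11},a_{22},b_1,c_1,c_2$ depend smoothly on $r$ only by Lemma \ref{lem2}; the coupling between $\varphi$ and $\Psi$ in \eqref{elli4} is only through lower-order terms; and although \eqref{aBC1-e-w} only records $\|F_2\|_{1,\alpha}$, inspecting the definitions \eqref{transf3}, \eqref{transf1}, \eqref{ogif} together with $\phi\in C^{3,\alpha}(\overline{\Omega})$ from \eqref{regu} in fact yields $F_1,F_2\in C^{2,\alpha}(\overline{\Omega})$ with norms bounded by $C(\delta^2+\sigma_p)$.

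First I would apply Schauder estimates to $\mathcal{L}_2(\varphi,\Psi)=F_3$, viewing $F_3-c_1\partial_r\varphi\in C^{0,\alpha}(\overline{\Omega})$ as the forcing. Using the even reflection of $\Psi$ across $\Gamma_\omega$ as in \eqref{4-26} (the homogeneous Neumann condition $\partial_\theta\Psi=0$ is preserved by even reflection, while \eqref{compatibilityf} and \eqref{compati} ensure matching Hölder regularity of the extended RHS), together with Dirichlet data on $\Gamma_{en}\cup\Gamma_{ex}$, I obtain $\Psi\in C^{2,\alpha}(\overline{\Omega})$ with a quantitative bound. Next I would rewrite $\mathcal{L}_1(\varphi,\Psi)=\partial_r F_1+\partial_\theta F_2$ in non-divergence form
\begin{equation*}
a_{11}\partial_r^2\varphi+a_{22}\partial_\theta^2\varphi+a_{11}'\partial_r\varphi
=\partial_r F_1+\partial_\theta F_2-b_1'\Psi-b_1\partial_r\Psi,
\end{equation*}
and observe that with the improved $C^{2,\alpha}$ bounds on $(F_1,F_2)$ and the just-proved $C^{2,\alpha}$ bound on $\Psi$, the new RHS lies in $C^{1,\alpha}(\overline{\Omega})$. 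A second Schauder estimate with the mixed boundary conditions \eqref{aBC4} then produces $\varphi\in C^{3,\alpha}(\overline{\Omega})$, giving the combined estimate $\|\varphi\|_{3,\alpha;\overline{\Omega}}+\|\Psi\|_{2,\alpha;\overline{\Omega}}\leq C(\delta^2+\sigma_p)$.

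Tracing back, $\psi=\varphi+rh+\mathfrak{g}\in C^{3,\alpha}(\overline{\Omega})$ gives $\check{\mathcal{U}}=\partial_r\psi$ and $\check{\mathcal{V}}=\hat{r}^{-1}\partial_\theta\psi$ in $C^{2,\alpha}(\overline{\Omega})$; combined with $\phi\in C^{3,\alpha}(\overline{\Omega})$, the relations \eqref{transUV1} yield $(\mathcal{U},\mathcal{V})\in(C^{2,\alpha}(\overline{\Omega}))^2$, while $\check{\Phi}=\Psi+\Phi^*\in C^{2,\alpha}(\overline{\Omega})$, proving \eqref{C2c}. The compatibility conditions \eqref{compaC2} follow from the parity of the even/odd reflections underpinning the Schauder step together with \eqref{compati} and \eqref{compatibilityf}. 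Uniqueness is immediate: any two $C^{2,\alpha}$ solutions differ by an element of $\mathcal{R}$ solving the homogeneous problem, which vanishes by the $H^1$-estimate of Proposition \ref{prop1}.

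The main technical obstacle is localised at the four corners of $\overline{\Omega}$, where the Dirichlet portion $\Gamma_{en}\cup\Gamma_{ex}$ meets the mixed Neumann portion $\Gamma_\omega$; standard Schauder theory does not apply directly there. The compatibility conditions \eqref{compati} and \eqref{compatibilityf} are engineered precisely so that the even/odd reflection across $\Gamma_\omega$ (as used for $\phi$ in \eqref{4-26}--\eqref{extend1}) produces extended coefficients and source terms with matching Hölder regularity, thereby reducing the corner problem to a regularity problem on a smooth portion of an enlarged domain. Verifying this compatibility carefully for each of the three components $F_1,F_2,F_3$ and each bootstrap step is the heart of the proof.
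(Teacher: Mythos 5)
Your overall bootstrap strategy is close in spirit to the paper's, but there is a genuine regularity gap that makes the second Schauder step fail as written.

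The key false claim is that ``inspecting the definitions \eqref{transf3}, \eqref{transf1}, \eqref{ogif} together with $\phi\in C^{3,\alpha}(\overline{\Omega})$ in fact yields $F_1,F_2\in C^{2,\alpha}(\overline{\Omega})$.'' Look at $F_2=\mathfrak{f}_2-a_{22}(rh'+\mathfrak{g}')$ in \eqref{transf3}. Here $\mathfrak{f}_2\in C^{2,\alpha}(\overline\Omega)$ and $a_{22}\mathfrak{g}'=a_{22}r_2V_{en}\in C^{2,\alpha}$, but $h$ comes from $(V_{en},\Phi_{en},\ldots)\in C^{2,\alpha}([-\theta_0,\theta_0])$, so $h\in C^{2,\alpha}$ only and hence $h'\in C^{1,\alpha}$. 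The term $a_{22}(r)\,rh'(\theta)$ therefore has $\partial_\theta^2\bigl(a_{22}rh'\bigr)=a_{22}rh'''$, which need not exist, so $F_2$ is only $C^{1,\alpha}(\overline\Omega)$ — exactly what \eqref{aBC1-e-w} records. Consequently $\partial_r F_1+\partial_\theta F_2-b_1'\Psi-b_1\partial_r\Psi$ lands only in $C^{0,\alpha}$ and your second Schauder step yields $\varphi\in C^{2,\alpha}$, not $C^{3,\alpha}$. Tracing back, $\psi=\varphi+rh+\mathfrak{g}\in C^{2,\alpha}$, so $\check{\mathcal{U}}=\partial_r\psi$ and $\check{\mathcal{V}}=\hat r^{-1}\partial_\theta\psi$ land only in $C^{1,\alpha}$, and you have lost a full derivative: the desired estimate \eqref{C2c} for $(\mathcal{U},\mathcal{V})$ does not follow.

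The defect is that the homogenization \eqref{newva1}, $\varphi=\psi-rh-\mathfrak{g}$, pushes the $C^{2,\alpha}$ Neumann data $h$ into an interior source term $a_{22}rh'$, trading smooth boundary data for a rough source. The paper sidesteps this: its Proposition~\ref{prop4} returns to the original variables $(\mathcal{U},\mathcal{V},\check\Phi)$ and system \eqref{elli1} with boundary conditions \eqref{aBC1}. In Step~1 one first upgrades $\check\Phi$ to $C^{2,\alpha}$ from the scalar elliptic equation $\partial_r(\hat r\check\Phi_r)+\hat r^{-1}\partial_\theta^2\check\Phi+c_2\check\Phi=f_3-c_1\mathcal{U}$ with $C^{0,\alpha}$ right-hand side; in Step~2 one then views $(\mathcal{U},\mathcal{V})$ as a div-curl first-order system with data $f_1-b_1\check\Phi,\,f_2\in C^{2,\alpha}$, $f_4\in C^{1,\alpha}$, and with the \emph{inhomogeneous} boundary data $\mathcal{U}=h\in C^{2,\alpha}$ on $\Gamma_{ex}$ and $\mathcal{V}=V_{en}\in C^{2,\alpha}$ on $\Gamma_{en}$ applied directly. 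In the corresponding stream-function reduction $h$ appears as Neumann boundary data of class $C^{2,\alpha}$ (rather than as an interior source), which Schauder theory is perfectly happy to turn into $C^{3,\alpha}$ interior regularity for the potential and hence $C^{2,\alpha}$ for $(\mathcal{U},\mathcal{V})$. The fix for your argument is precisely to drop the subtraction of $rh$ in \eqref{newva1} and bootstrap on $\psi$ with the Neumann condition $\partial_r\psi=h$ on $\Gamma_{ex}$ retained, using the source terms $\mathfrak{f}_1,\mathfrak{f}_2\in C^{2,\alpha}$ of \eqref{transf1}, or equivalently to pass to the paper's $(\mathcal{U},\mathcal{V},\check\Phi)$ formulation for the higher-order estimate.
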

\begin{proof}
\textbf{Step 1: $C^{2,\alpha}$ estimate for $\check{\Phi}$.}
It follows from \eqref{transUV1}, \eqref{transf1}, \eqref{transUV2}, \eqref{newva1}, \eqref{aBC1-e-w} and \eqref{esma1} that
\begin{equation}
\Vert(\mathcal{U}, \mathcal{V}, \check{\Phi})\Vert_{1,\alpha;\overline\Omega}
\leq C_e\Big(\Vert(f_1,f_2)\Vert_{2,\alpha;\overline\Omega}
+\Vert f_3\Vert_{0,\alpha;{\overline\Omega}}
+\Vert f_4\Vert_{1,\alpha;{\overline\Omega}}+\sigma_p\Big)\leq C_e(\delta^2+\sigma_p),
\end{equation}
provided  $C_e$ is a constant. With this estimate, we consider the elliptic equation
\begin{equation}
\partial_r(\hat{r}\check{\Phi}_{r})+\frac{1}{\hat{r}}\partial_{\theta}^2\check{\Phi}
+c_2(r)\check{\Phi}
=f_3-c_1(r)\mathcal{U},
\end{equation}
with the boundary conditions
\begin{equation}
\begin{cases}
\check{\Phi}=\Phi_{en}-\bar \Phi(0),\ \ &{\rm{on}}\ \ \Gamma_{en},\\
\check{\Phi}=\Phi_{ex}-\bar \Phi(R),\ \ &{\rm{on}}\ \ \Gamma_{ex},\\
\partial_{\theta}\check{\Phi}=0, \ \ &{\rm{on}}\ \ \Gamma_\omega.
\end{cases}
\end{equation}
Employing the compatibility condition \eqref{compatibilityf}, the standard Schauder estimate in \cite{Gilbarg} and the method of reflection deduces that
\begin{equation}\label{PhiC2}
\Vert\check{\Phi}\Vert_{2,\alpha;\overline\Omega}
\leq C_{\mathcal{A}} (\Vert f_3\Vert_{1,\alpha;\overline{\Omega}}+\Vert \mathcal{U}\Vert_{1,\alpha;\overline{\Omega}}+\sigma_p)\leq C_{\mathcal{A}}(\delta^2+\sigma_p),
\end{equation}
where $C_{\mathcal{A}}$ is a constant depending only on background data and $\alpha$.

\textbf{Step 2: $C^{2,\alpha}$ estimate for $(\mathcal{U},\mathcal{V})$.}
For the linear system
\begin{equation}
\begin{cases}
\partial_{r}(A_{11}(r)\mathcal{U})+\partial_{\theta}(
A_{22}(r)\mathcal{V})=\partial_r (f_1-b_1(r)\check{\Phi})+\partial_{\theta} f_2,\\
\partial_{r}(\hat{r}\mathcal{V})-\partial_{\theta}\mathcal{U}=f_4,
\end{cases}
\end{equation}
with the boundary conditions
\begin{equation}
\begin{cases}
\mathcal{V}=V_{en},\ \ &{\rm{on}}\ \ \Gamma_{en},\\
\mathcal{U}=h,\ \ &{\rm{on}}\ \ \Gamma_{ex},\\
\mathcal{V}=0, \ \ &{\rm{on}}\ \ \Gamma_\omega.
\end{cases}
\end{equation}
The standard symmetric extension technique, together with the standard Schauder estimate in \cite{Gilbarg} again, yields that $(\mathcal{U},\mathcal{V})\in \left(C^{2,\alpha}(\overline{\Omega})\right)^2$. Furthermore, it holds that
\begin{equation}\label{UVC2}
\Vert(\mathcal{U},\mathcal{V})\Vert_{2,\alpha;\overline\Omega}
\leq C_{\mathcal{B}} (\Vert (f_1,f_2,\check{\Phi})\Vert_{2,\alpha;\overline{\Omega}}+\Vert f_4\Vert_{1,\alpha;\overline{\Omega}}+\sigma_p)\leq C_{\mathcal{B}}(\delta_2+\sigma_p),
\end{equation}
provided the constant $C_{\mathcal{B}}$ depends only on background data and $\alpha$. Moreover, \eqref{compaC2} can be also verified. Consequently, the linear boundary value problem \eqref{elli1} and \eqref{aBC1} has a unique solution $(\mathcal{U}, \mathcal{V}, \check{\Phi})\in[C^{2,\alpha}(\overline{\Omega})]^3$ that is actually a classical solution. This finishes the proof of Proposition \ref{prop4}.
\end{proof}

\section{Proof of Theorem \ref{thm1}}\noindent
\par For  any given  $\mathbf{V}^{\sharp}\in \mathcal{J}(\delta)  $, by \eqref{3-7} and Proposition \ref{prop4}, we have obtained a new  $\mathbf{V}$, which satisfies
\begin{equation}\label{es1}
\Vert\mathbf{V}\Vert_{2,\alpha;\overline\Omega}
\leq \mathcal{C}_1(\delta^2+\sigma_p),
\end{equation}
where $\mathcal{C}_1>0$ is a constant depending on the background data and $\alpha$. Define a map $ \mathcal{T} $ as follows
\begin{equation}\label{4-101}
\mathcal{T}(\mathbf{V}^{\sharp})=\mathbf{V},
\ \ {\rm{ for }} \ \mathbf{V}^{\sharp}\in \mathcal{J}(\delta).
\end{equation}
  Let $\sigma_1=\frac{1}{4(\mathcal{C}_1^2+1)}$ and choose $\delta= 2\mathcal{C}_1\sigma_p$. Then if $ \sigma_p\leq \sigma_1 $, one has
\begin{equation*}
\begin{aligned}
\Vert\mathbf{V}\Vert_{2,\alpha;\overline\Omega}\leq \delta. \end{aligned}
\end{equation*}
This implies $\mathcal{T}$ maps $\mathcal{J}(\delta)$ into itself. Next, we will show that $\mathcal{T}$ is a contraction mapping in $\mathcal{J}(\delta) $. Set $\mathbf{V}_{i}^{\sharp} \in \mathcal{J}(\delta)$,  $i=1,2$, then $\mathbf{V}_{i}=\mathcal{T}(\mathbf{V}_{i}^{\sharp}) $. Define
\begin{equation*}
\mathbf{Y}^{\sharp}=\mathbf{V}_{1}^{\sharp}-\mathbf{V}_{2}^{\sharp},\quad
\mathbf{Y}=\mathbf{V}_{1}-\mathbf{V}_{2}.
\end{equation*}
It follows from \eqref{St} and \eqref{Kt} that $Y_4=\mathcal{S}_1-\mathcal{S}_2$ and $Y_5=\mathcal{K}_1-\mathcal{K}_2$ satisfy
\begin{equation}\label{St-d}
\begin{cases}
\bigg(\partial_{r}+\frac{\mathcal{V}_1^{\sharp}}{\hat{r}(\bar U+\mathcal{U}_1^{\sharp})}\partial_{\theta} \bigg) Y_4=\bigg(\frac{\mathcal{V}_2^{\sharp}}{\hat{r}(\bar U+\mathcal{U}_2^{\sharp})}-\frac{\mathcal{V}_1^{\sharp}}{\hat{r}(\bar U+\mathcal{U}_1^{\sharp})}\bigg)\partial_{\theta}\mathcal{S}_2,\\
Y_4(0,\theta)=0,
\end{cases}
\end{equation}
and
\begin{equation}\label{Kt-d}
\begin{cases}
\bigg(\partial_{r}+\frac{\mathcal{V}_1^{\sharp}}{\hat{r}(\bar U+\mathcal{U}_1^{\sharp})}\partial_{\theta} \bigg) Y_5=\bigg(\frac{\mathcal{V}_2^{\sharp}}{\hat{r}(\bar U+\mathcal{U}_2^{\sharp})}-\frac{\mathcal{V}_1^{\sharp}}{\hat{r}(\bar U+\mathcal{U}_1^{\sharp})}\bigg)\partial_{\theta}\mathcal{K}_2,\\
Y_5(0,\theta)=0.
\end{cases}
\end{equation}
By the characteristic method, one has
\begin{equation}\label{4-105}
\Vert(Y_4,Y_5)\Vert_{1,\alpha;\overline\Omega}\leq C\Vert\mathbf{V}_2\Vert_{2,\alpha;\overline\Omega} \Vert\mathbf{Y}^{\sharp}\Vert_{1,\alpha;\overline\Omega}
\leq C\delta \Vert\mathbf{Y}^{\sharp}\Vert_{C^{1,\alpha}(\overline\Omega)}.
\end{equation}
Similarly, it follows from \eqref{elli1-d} and \eqref{NaBC1} that $Y_1=\mathcal{U}_1-\mathcal{U}_2$, $Y_2=\mathcal{V}_1-\mathcal{V}_2$ and $Y_3=\check{\Phi}_1-\check{\Phi}_2$ satisfy the equations
\begin{equation}\label{elli1-d}
\begin{cases}
\partial_{r}(A_{11}(r)Y_1)+\partial_{\theta}(
A_{22}(r)Y_2)+\partial_{r}(b_1(r)Y_3)\\
=\partial_r (f_1^{1}-f_1^{2})+\partial_{\theta}(f_2^{1}-f_2^{2}),\\
\partial_r(\hat{r}(Y_3)_r)+\frac{1}{\hat{r}}\partial_{\theta}^2Y_3
+c_1(r)Y_1+c_2(r)Y_3
=f_3^{1}-f_3^{2},\\
\partial_{r}(\hat{r}Y_2)-\partial_{\theta}Y_1=f_4^{1}-f_4^{2},
\end{cases}
\end{equation}
with the boundary conditions
\begin{equation}\label{NaBC1}
\begin{cases}
(Y_2,Y_3)=(0,0),\ \ &{\rm{on}}\ \ \Gamma_{en},\\
(Y_1,Y_3)=(h^{1}-h^{2},0),\ \ &{\rm{on}}\ \ \Gamma_{ex},\\
Y_2=\partial_{\theta}Y_3=0, \ \ &{\rm{on}}\ \ \Gamma_\omega,
\end{cases}
\end{equation}
where
\begin{equation*}
f_i^{ k}=f_i(r,\theta;\mathcal{U}^{\sharp k},\mathcal{V}^{\sharp k},\check{\Phi}^{\sharp k},\mathcal{S}^{ k},\mathcal{K}^{ k}) \ \
{\rm {and}} \ \
h^{ k}=h(\theta;\mathcal{U}^{\sharp k},\mathcal{V}^{\sharp k},\mathcal{S}^{ k},\mathcal{K}^{ k})
\end{equation*}
for $i=1,2,3,4$ and $k=1,2$. Then we have the following estimate
\begin{equation}\label{4-107}
\begin{split}
\Vert(Y_1,Y_2,Y_3)\Vert_{1,\alpha;\overline\Omega}
\leq & C\bigg( \Vert(f_1^{1},f_2^{1})-(f_1^{2},f_2^{2})\Vert_{1,\alpha;\overline\Omega}
+\Vert (f_3^{1},f_4^{1})-(f_3^{2},f_4^{2})\Vert_{0,\alpha;\overline\Omega}\\
&\quad+\Vert h^{1}-h^{2}\Vert_{1,\alpha;[-\theta_0,\theta_0]}\bigg)
\leq C \delta \Vert\mathbf{Y}^{\sharp}\Vert_{1,\alpha;\overline\Omega}.
\end{split}
\end{equation}
Collecting all the above estimates leads to
\begin{equation}\label{contraction}
\Vert\mathbf{Y}\Vert_{1,\alpha;\overline\Omega}\leq \mathcal{C}_{2}\delta\Vert\mathbf{Y}^{\sharp}\Vert_{1,\alpha;\overline\Omega},
\end{equation}
provided the constant $\mathcal{C}_{2}>0$.
Set $\sigma_2=\min\left\{\sigma_1,\frac{1}{3\mathcal{C}_1\mathcal{C}_2}\right\}$. Then for any $\sigma\leq \sigma_2$, it holds that $\mathcal{C}_{2}\delta= 2\mathcal{C}_1 \mathcal{C}_2\sigma\leq \frac{2}{3}$. Therefore, $\mathcal{T}$ is a contraction mapping with respect to $C^{1,\alpha}$ norm so that there exists a unique fixed point $\mathbf{V}=(\mathcal{U},\mathcal{V},\check{\Phi},\mathcal{S},\mathcal{K})\in \mathcal{J}(\delta)$. This completes the proof of Theorem \ref{thm1}.
\par{\bf Acknowledgement.}
 The research of Yuanyuan Xing is partially supported by  the Natural Science Foundation of Hebei province, China (No. A2025501003) and Fundamental Research Funds for the Central Universities (N2523027). The research of Zihao Zhang was supported by  the Postdoctoral Fellowship Program of CPSF under Grant Number GZB20250719.
\par {\bf Data availability.} No data was used for the research described in the article.
    \par {\bf Conflict of interest.} This work does not have any conflicts of interest.

\end{document}